\def\RR{{\mathbb R}}
\def\CC{{\mathbb C}}
\newcommand{\ph}{\phantom}
\newcommand{\lsim}{\stackrel{\textstyle <}{\raisebox{-.6ex}{\(\sim\)}}}
\newtheorem{example}[theorem]{Example}
\title{Roots of bivariate polynomial systems \\
via determinantal representations}
\author{
Bor~Plestenjak\thanks{%
Department of Mathematics, University of Ljubljana, Jadranska 19,
SI-1000 Ljubljana, Slovenia, {\tt bor.plestenjak@fmf.uni-lj.si}.}
\and
Michiel~E.~Hochstenbach\thanks{%
Department of Mathematics and Computer Science,
TU Eindhoven,
PO Box 513, 5600 MB, The Netherlands.
{\tt www.win.tue.nl/$\sim$hochsten}.
This author was supported by an NWO Vidi grant.}}
\begin{document}
\maketitle

\begin{abstract}
We give two determinantal representations for a bivariate polynomial.
They may be used to compute the zeros of a system of two of these polynomials
via the eigenvalues of a two-parameter eigenvalue problem.
The first determinantal representation is suitable for polynomials with scalar or
 matrix coefficients, and consists of matrices with asymptotic order
$n^2/4$, where $n$ is the degree of the polynomial.
The second representation is useful for scalar polynomials and has
asymptotic order $n^2/6$.
The resulting method to compute the roots of a system of two bivariate polynomials
is competitive with some existing methods
for polynomials up to degree 10, as well as for polynomials with a small number of terms.
\end{abstract}

\begin{keywords}
System of bivariate polynomial equations,
determinantal representation,
two-parameter eigenvalue problem,
polynomial multiparameter eigenvalue problem.
\end{keywords}

\begin{AMS}
65F15, 65H04, 65F50, 13P15.
\end{AMS}

%%%%%%%%%%%%%%%%%%%%%%%%%%%%%%%%%%%%%%%%%%%%%%%%%%%%%%%%%%%%%%%%%%%%%
\section{Introduction}
In this paper, we make some progress on a problem that has essentially been open 
since 1902 \cite{Dixon}. It is well known that for each monic polynomial
$p(x)=p_0+p_1x+\cdots +p_{n-1}x^{n-1}+x^n$ one can construct a matrix 
$A\in\CC^{n\times n}$, such that $\det(x I- A)=p(x)$. One of the options is a 
companion matrix (see, e.g., \cite[p.~146]{HJo85})
\[A_p=\left[\begin{matrix}0 & 1 & 0 & \cdots & 0\cr
0 & 0 & 1 & \ddots & \vdots \cr
\vdots & \vdots & & \ddots & 0\cr
0 & 0 & & & 1 & \cr
-{p_0} & -{p_1} & \cdots & \cdots & -{p_{n-1}}\end{matrix}\right].\]
Thus, we can numerically compute the zeros of the
polynomial $p$ as eigenvalues of the corresponding companion
matrix $A_p$ using tools from numerical linear
algebra.
This approach is used in many numerical packages,
for instance in the \texttt{roots} command in Matlab~\cite{Matlab}.

The aim of this paper is to find a similar elegant tool for finding the zeros of
a system of two bivariate polynomials of degree $n$
\begin{equation}\label{p}
\begin{split}
p(x,y) &:= \sum_{i=0}^n \, \sum_{j=0}^{n-j} \ p_{ij} \, x^i \, y^j=0, \\
q(x,y) &:= \sum_{i=0}^n \, \sum_{j=0}^{n-j} \ q_{ij} \, x^i \, y^j=0.
\end{split}
\end{equation}
An approach analogous to the univariate case would be to construct
matrices $A_1$, $B_1$, $C_1$, $A_2$, $B_2$, and $C_2$ of size $n\times n$ such that
\begin{equation}\label{eq:detrep}
\begin{split}
\det(A_1+x B_1 +y C_1)&=p(x,y),\\[1mm]
\det(A_2+x B_2 +y C_2)&=q(x,y).
\end{split}
\end{equation}
This would give an equivalent two-parameter eigenvalue problem \cite{Atkinson}
\begin{equation}\label{eq:twopar}
\begin{split}
(A_1+x B_1 +y C_1) \, u_1&=0,\\[1mm]
(A_2+x B_2 +y C_2) \, u_2&=0
\end{split}
\end{equation}
that could be solved by the standard %numerical algebra
tools like the QZ algorithm, see \cite{HKP} for details.

This idea looks promising, but there are many obstacles on the way to a
working numerical algorithm that could be applied 
to a system of bivariate polynomials.
Although it is known for {\em more than a century} \cite{Dickson, Dixon,HV} that such
matrices of size $n \times n$ exist,
so far there are no efficient numerical algorithms that can construct them.
Even worse, it seems that the construction of such matrices
might be an even harder problem than finding zeros of polynomials $p$ and $q$.
There exist simple and fast constructions \cite{MuhicPlestenjak09, Quarez}
that build matrices of size ${\cal O}(n^2)$
that satisfy (\ref{eq:detrep}), where the resulting
two-parameter eigenvalue problem (\ref{eq:twopar}) is singular;
we will discuss more details in Section~\ref{sec:multipar}.
Recent results \cite{MuhicPlestenjak09} show that it is possible to solve singular
two-parameter eigenvalue problems numerically for small to medium-sized matrices.
However, the ${\cal O}(n^2)$ size of the matrices
pushes the complexity of the algorithm to the enormous %large
${\cal O}(n^{12})$ and it is reported in \cite{MuhicPhd} that this approach
to compute zeros is competitive only for polynomials of degree $n<5$.

The construction of \cite{MuhicPlestenjak09} yields matrices that are of
asymptotic order $\frac{1}{2} n^2$, while those of \cite{Quarez} are of
asymptotic order $\frac{1}{4} n^2$.
In this paper we give two new representations.
The first one uses the tree structure of monomials in $x$ and $y$.
The resulting matrices are smaller than those of \cite{Quarez},
with the same asymptotic order $\frac{1}{4} n^2$.
This representation can be used for bivariate polynomials as well as
for polynomial multiparameter eigenvalue problems \cite{MuhicPlestenjakLAA};
that is, for polynomials with matrix coefficients.
The second representation is even more condensed, with asymptotic order $\frac{1}{6} n^2$,
and can be applied to scalar bivariate polynomials.
Although the size of the matrices asymptotically 
still grows quadratically with $n$,
the smaller size renders this approach attractive for polynomials of degree $n \lsim 10$,
or for larger $n$ if the polynomials have only few terms.
This already is an interesting size for a practical use and might
trigger additional interest in such methods that could culminate in
even more efficient representations. Moreover, as we will see,
for modest $n$, the order of the matrices is only roughly $2n$.
Furthermore, for polynomials of degree 3, we present a construction of matrices
of order (exactly) 3.

There are other ways
to study a system of polynomials as an eigenvalue
problems, see, e.g., \cite{Dreesen} and \cite{Stetter},
but they involve more symbolic computation. In 
\cite{Lasserre} an algorithm is proposed that only requires to
solve linear systems and check rank conditions, which are similar tools
that we use in the staircase method \cite{MuhicPlestenjak09} to solve the obtained 
singular two-parameter eigenvalue problem.
Of course, there are
many numerical methods that can be applied to systems of bivariate polynomials,
two main approaches are the homotopy continuation and the resultant method,
see, e.g., \cite{PHClab, Jonsson,  Sommese, Sturmfels, phc} and the references therein.
There are also
many methods which aim to compute only real solutions of a system of two real bivariate
polynomials, see, e.g., \cite{Noferini, VanBarel}.
We compare our method with two existing approaches, 
Mathematica's {\tt NSolve} \cite{Wolfram} and PHCpack \cite{phc} in Section~\ref{sec:numex},
and show that our approach is competitive for polynomials up to degree $\lsim 10$.

Let us mention that another advantage of writing the system of bivariate
polynomials as a two-parameter eigenvalue problem is that then we can
apply iterative subspace numerical methods such as the Jacobi--Davidson method
and compute just a small part of zeros close to a given target
$(x_0,y_0)$ \cite{HMP14}; we will not pursue this approach in this paper.

The rest of this paper is organized as follows. In Section~\ref{sec:motiv} we
give some applications where bivariate polynomial systems have to be solved.
In Section~\ref{sec:detrep} we introduce determinantal representations.
Section~\ref{sec:multipar} focuses on two-parameter eigenvalue problems.
In Section~\ref{sec:lin1} we give a determinantal representation that
is based on the ``tree'' of monomials, involves no computation,
and is suitable for both scalar and matrix polynomials.
The matrices of the resulting representation are asymptotically of order $\frac{1}{4} n^2$.
In Section~\ref{sec:lin2} we give a representation with smaller matrices,
of asymptotic order $\frac{1}{6} n^2$,
that involves just a trivial amount of numerical computation
(such as computing roots of low-degree univariate polynomials)
and can be computed very efficiently.
This representation may be used for scalar polynomials.
We end with some numerical experiments in Section~\ref{sec:numex}
and conclusions in Section \ref{sec:conc}.

%%%%%%%%%%%%%%%%%%%%%%%%%%%%%%%%%%%%%%%%%%%%%%%%%%%%%%%%%%%%%%%%%%%%%
\section{Motivation}\label{sec:motiv}
In delay differential equations, determining critical delays in the
case of so-called commensurate delays may lead to a problem of type \eqref{p} \cite{JHo09}.
The simplest example is of the form $x'(t) = a \, x(t) + b \, x(t-\tau) + c \, x(t-2 \tau)$,
where $\tau > 0$ is the delay; asked are values of $\tau$ that results in periodic solutions.
This yields $p$ and $q$ of degrees 2 and 3, respectively.
More delay terms with delays that are multiples of $\tau$ gives polynomials of higher degree.

Polynomial systems of form \eqref{p}
arise in numerous applications and fields, such as
signal processing \cite{BLe14, CRK09, Hat97, Van07} and robotics \cite{ZRo11}.
In computer aided design, one may be interested in the intersections
of algebraic curves, such as ellipses \cite{BGW88, KSP04, LSc02}.
In two-dimensional subspace minimization \cite{BSS88}, such as
polynomial tensor optimization, one is interested in two-dimensional
searches $\min_{\alpha, \beta} F(x + \alpha d_1 + \beta d_2)$, where $F: \RR^n \to \RR$,
$x$ is the current point, and $d_1$ and $d_2$ are search directions;
see \cite{Sor14, VanBarel} and the 
references therein.

In systems and control the first-order conditions of the $L_2$-approximation problem
of minimizing $\|h-\widetilde h\|^2 = \int_0^{\infty} |h(t)-\widetilde h(t)|^2 \, d\!t$,
for a given impulse response $h$ of degree $n$,
and $\text{degree}(\widetilde h) = \widetilde n \le n$, lead to a system of type \eqref{p} \cite{HMa96}.

When considering quadratic eigenvalue problems in numerical linear algebra,
it is of interest to determine $\text{argmin}_{\theta \in \CC} \|(\theta^2 A+\theta B + C)u\|$,
as an approximate eigenvalue for a given approximate eigenvector $u$, which gives a system of degree 3 in
the real and imaginary part of $\theta$ \cite[Sect.~2.3]{HVo03}.
Generalizations to polynomial eigenvalue problems give rise to polynomials
$p$ and $q$ of higher degree.

Also, there has been some recent interest in this problem in the context
of the {\sf chebfun2} project \cite{Noferini, chebfun}.
In {\sf chebfun2}, nonlinear real bivariate functions are approximated by bivariate polynomials,
so solving \eqref{p} is relevant for finding zeros of systems of real nonlinear bivariate functions and
for finding local extrema of such functions.

%%%%%%%%%%%%%%%%%%%%%%%%%%%%%%%%%%%%%%%%%%%%%%%%%%%%%%%%%%%%%%%%%%%%%
\section{Determinantal representations}\label{sec:detrep}
In this section we introduce determinantal
representations and present
some existing constructions.
The difference between what should theoretically be possible
and what can be done in practice is huge. The algorithms we propose
reduce the difference only by a small (but still significant) factor;
there seems to be plenty of room for future improvements.

We say that a bivariate polynomial $p(x,y)$ has degree $n$ if
all its monomials $p_{ij}x^iy^j$ have total degree less or equal to $n$, i.e.,
$i+j\le n$, and if at least one of the monomials has total degree equal to $n$.
%We can write such polynomial as
%\[p(x,y)=\sum_{i=0}^n\sum_{j=0}^{n-j} \, p_{ij} \, x^i \, y^j.\]
We say that the square $m\times m$ matrices $A,B$, and $C$ form a \emph{determinantal representation}
of the polynomial $p$ if $\det(A+x B +y C)=p(x,y)$. As our motivation is
to use eigenvalue methods to solve polynomial systems, we will,
instead of determinantal representation, often use the term \emph{linearization} since
a determinantal representation transforms an eigenvalue problem that
involves polynomials of degree $n$ into a linear eigenvalue problem \eqref{eq:twopar}.
A definition of linearization that extends
that for the univariate case (see, e.g., \cite{Psarrakos}) is the following.

\bigskip\noindent
\begin{definition}
A linear bivariate pencil $A+x B + y C$ of size $m\times m$ is a linearization of the polynomial
$p(x,y)$ if there exist two polynomial matrices $L(x,y)$ and
$Q(x,y)$ such that $\det(L(x,y))\equiv\det(Q(x,y))\equiv 1$ and
\[L(x,y) \, (A+xB+y C) \, Q(x,y) = \left[\begin{matrix}p(x,y) & 0\cr 0 & I_{m-1}\end{matrix}\right].\]
\end{definition}

\noindent
We are interested not only in linearizations of scalar polynomials but also in linearizations
of matrix bivariate polynomials of the form (cf.~\eqref{p})
\begin{equation}\label{eq:matpol2}
P(x,y)=\sum_{i=0}^n \, \sum_{j=0}^{n-j} \, x^i \, y^j \, P_{ij},
\end{equation}
where the $P_{ij}$ are $k\times k$ matrices. In line with the above, a linear pencil $A+x B+y C$ of
matrices of size $m\times m$ presents a linearization (determinantal representation) of the matrix polynomial
$P(x,y)$ if there exist two polynomial matrices $L(x,y)$ and
$Q(x,y)$ such that $\det(L(x,y))\equiv\det(Q(x,y))\equiv 1$ and
\[L(x,y) \, (A+xB+y C) \, Q(x,y) = \left[\begin{matrix}P(x,y) & 0\cr 0 & I_{m-k}\end{matrix}\right].\]
In this case $\det(A+xB+yC)=\det(P(x,y))$. Each linearization of a matrix polynomial gives a
linearization for a scalar polynomial, as we can think of scalars as of $1\times 1$ matrices;
the opposite is not true in general.

Dixon \cite{Dixon} showed that for every scalar bivariate polynomial
$p(x,y)$ of degree $n$ there exists a determinantal representation
with symmetric matrices of size $n\times n$. Dickson~\cite{Dickson}
later showed that this result cannot be extended to general
polynomials in more than two variables, except for three
variables and polynomials of degree two and three, and four
variables and polynomials of degree two. Although they both
give constructive proofs, there
does not seem to exist an efficient numerical algorithm to
construct the determinantal
representation with matrices of size $n\times n$ for a given
bivariate polynomial of degree $n$.

In recent years, the research in determinantal representations is growing, as
determinantal representations for a particular subset of polynomials,
{\em real zero polynomials}, are related to linear matrix inequality
(LMI) constraints used in semidefinite programming SDP. For an
overview see, e.g., \cite{Netzer, VinnikovSurvey}; here we give just the essentials
for bivariate polynomials that are related to our problem.

We say that a real polynomial $p(x,y)$ satisfies the \emph{real zero condition}
with respect to $(x_0,y_0)\in\RR^2$
if for all $(x,y)\in\RR^2$ the univariate polynomial
$p_{(x,y)}(t)=p(x_0+tx,y_0+ty)$ has only real zeros.
A two-dimensional \emph{LMI set} is defined as
\[\left\{(x,y)\in\RR^2: A+ xB+y C\succeq 0\right\},\]
where $A,B$, and $C$ are symmetric matrices of size $m\times m$
and $\succeq 0$ stands
for positive semidefinite. In SDP
we are interested in convex sets ${\cal S}\subset\RR^2$ that admit an
LMI representation, i.e., ${\cal S}$ is an LMI set for certain
matrices $A,B$ and $C$. Such sets are called \emph{spectrahedra}
and Helton and Vinnikov~\cite{HV} showed that such ${\cal S}$ must be
an algebraic interior, whose minimal defining polynomial $p$
satisfies the real zero condition with respect to any point
in the interior of ${\cal S}$. Their results state that
if a polynomial $p(x,y)$ of degree $n$ satisfies real zero condition with
respect to $(x_0,y_0)$, then there exist
symmetric matrices $A,B$, and $C$ of size $n\times n$ such that
$\det(A+xB+yC)=p(x,y)$ and $A+x_0B+y_0C\succeq 0$.
Matrices $A,B$, and $C$ thus form a particular
determinantal representation for $p$.

The problem
of constructing an LMI representation with symmetric or Hermitian matrices
$A,B,$ and $C$ for a given spectrahedron ${\cal S}$
raised much more interest than the related problem of generating a determinantal
representation for a generic bivariate polynomial.
There exist procedures, which rely heavily on slow symbolic computation
or other expensive steps, that
return an LMI representation with Hermitian matrices for a given
spectrahedron, but they are not efficient enough. For instance, a method from \cite{Plaumann},
based on the proof from \cite{Dixon}, does return $n\times n$ matrices for
a polynomial of degree $n$, but the reported times (10 seconds for a
polynomial of degree 10) show that it is
much too slow for our purpose. As a first step of the above method
is to find zeros of a system of bivariate polynomials
of degree $n$ and $n-1$, this clearly can not be efficient enough for our needs.
In addition,
we are interested in determinantal representations for polynomials that do not
necessary satisfy the real zero condition.

In SDP and LMI the matrices
have to be symmetric or Hermitian, which is not required in our case.
We need a simple and fast numerical construction
of matrices that satisfy (\ref{eq:detrep}) and are as
small as possible---ideally their size should increase linearly and
not quadratically with $n$.

If we look at the available determinantal representations for
generic bivariate polynomials, we first have the
linearization by Khazanov with matrices
of size $n^2\times n^2$ \cite{Khazanov}. 
In \cite[Appendix]{MuhicPlestenjakLAA}, a smaller
linearization for bivariate matrix polynomials is given with 
block matrices of order $\frac{1}{2} n(n+1)$.
The linearization 
uses all monomials of degree up to $n-1$ and contains a direct expression for the
matrices $A$, $B$ and $C$ such that $\det(A+x B + y C)=p(x,y)$.
Similar to \cite{Khazanov}, it can be
applied to matrix polynomials. We give
an example for a general
matrix polynomial of degree $3$, from which it is possible to deduce the
construction for a generic degree. This linearization
will be superseded in Section~\ref{sec:lin1} by a more economical one.

\bigskip\noindent
\begin{example}\label{ex:lin0}
{\rm \cite[Appendix]{MuhicPlestenjakLAA}
We take a matrix bivariate polynomial of degree $3$
\[P(x,y)=P_{00}+x P_{10}+ y P_{01}+ x^2 P_{20}+ xy P_{11}+y^2 P_{02}+ x^3 P_{30}+ x^2y P_{21}+ xy^2 P_{12}+ y^3 P_{03}.\]
If $u$ is a nonzero vector, then $P(x,y)u=0$ if and only if
$(A + x B + y C) \underline u=0$,
where
\begin{equation}\label{eq:ex2}
{\footnotesize
A+ x B + y C = \left[\begin{matrix}P_{00} & P_{10} & P_{01}
  & P_{20} + x P_{30} &
    P_{11} + x P_{21} &
    P_{02} + x P_{12}  + y P_{03} \cr
  -x I_k & I_k & 0 & 0 & 0 & 0\cr
  -y I_k & 0 & I_k & 0 & 0 & 0\cr
  0 & -x I_k& 0 & I_k & 0 & 0 \cr
  0 & 0 & -x I_k& 0 & I_k & 0\cr
  0 & 0 & -y I_k & 0 & 0 & I_k\end{matrix}\right]}
\end{equation}
and
\[
\underline u=u\otimes\left[\begin{matrix}1& x& y& x^2 & xy &y^2\end{matrix}\right]^T.
\]
We have $\det(A + x B + y C)=\det(P(x,y))$ and
$A+xB+yC$ is a linearization of $P(x,y)$.
}
\end{example}

\bigskip\noindent
We remark that Quarez~\cite{Quarez} also gives explicit expressions for
determinantal representations. He is interested in symmetric representations
and is able to construct, for a bivariate polynomial of degree $n$ such that
$p(0)\ne 1$, a linearization with symmetric matrices of size
$N\times N$, where
\begin{equation}\label{eq:quarez}
N=2 \, {\lfloor {n/2}\rfloor +2 \choose 2} \approx \frac{n^2}{4}.
\end{equation}
This has asymptotically the same order as the linearization that we give in Section~\ref{sec:lin1}.
Let us also remark that in the phase, when we are solving a two-parameter eigenvalue
problem to compute the zeros of a system of two bivariate polynomials, we cannot
exploit the fact that the matrices are symmetric, so this is not important for our
application.

There are some other available tools, for instance it is possible to construct
a determinantal representation using the package NCAlgebra \cite{HMV,NCA} for
noncommutative algebra that runs in Mathematica \cite{Wolfram},
but this does not give satisfactory results for our
application as the matrices that we can construct have smaller size.

%%%%%%%%%%%%%%%%%%%%%%%%%%%%%%%%%%%%%%%%%%%%%%%%%%%%%%%%%%%%%%%%%%%%%
\section{Two-parameter eigenvalue problems}\label{sec:multipar}

In this section
we briefly present the two-parameter eigenvalue problem and the available numerical
methods. A motivation for the search for small determinantal representations is
that if we transform a system of bivariate polynomials into an eigenvalue problem,
then we can apply existing numerical methods for such problems.

A \emph{two-parameter eigenvalue problem} has the form \eqref{eq:twopar}
where $A_i,B_i$, and $C_i$ are given $n_i\times n_i$ complex matrices.
We are looking for $x,y\in\CC$ and nonzero vectors $u_i\in \CC^{n_i}$,
$i=1,2$, such that (\ref{eq:twopar}) is satisfied. In such case we say
that a pair $(x,y)$ is an eigenvalue and the
tensor product $u_1\otimes u_2$ is the corresponding eigenvector.
If we introduce the so-called operator determinants, the matrices
\begin{align}
  \Delta_0&=B_1\otimes C_2-C_1\otimes B_2,\nonumber\\
  \Delta_1&=C_1\otimes A_2-A_1\otimes C_2,\label{Deltaik}\\
  \Delta_2&=A_1\otimes B_2-B_1\otimes A_2\nonumber,
\end{align}
then the problem (\ref{eq:twopar}) is related to a coupled pair of
generalized eigenvalue problems
\begin{align}
  \Delta_1 \, w&=x \ \Delta_0 \, w,\nonumber\\[-1.5ex]
  \label{drugi}\\[-1.5ex]
  \Delta_2 \, w&=y \ \Delta_0 \, w\nonumber
\end{align}
for a decomposable tensor $w=u_1\otimes u_2$.
If $\Delta_0$ is nonsingular, then Atkinson~\cite{Atkinson} showed that
the solutions of (\ref{eq:twopar}) and (\ref{drugi}) agree and the
matrices $\Delta_0^{-1}\Delta_1$ and $\Delta_0^{-1}\Delta_2$ commute. In the nonsingular case
the two-parameter problem (\ref{eq:twopar}) has $n_1n_2$ eigenvalues and we
can numerically solve it with a variant of the QZ algorithm on (\ref{drugi}) from \cite{HKP}.
Ideally, if we could construct a determinantal representation with matrices $n \times n$
for a bivariate polynomial of degree $n$, this would be the method that we would apply on
the ``companion'' two-parameter eigenvalue problem to get the zeros of the polynomial system.
As matrices $\Delta_0,\Delta_1$, and $\Delta_2$ have size $n_1n_2\times n_1n_2$, the
computation of all eigenvalues of a nonsingular two-parameter eigenvalue problem
has time complexity ${\cal O}(n_1^3 \, n_2^3)$, which would lead to
${\cal O}(n^6)$ algorithm for a system of bivariate polynomials. Of course, for
this approach we need a construction of a determinantal representation
with matrices $n\times n$ that should not be more computationally expensive than
the effort to solve a two-parameter eigenvalue problem.

Unfortunately, all practical constructions for
determinantal representations (including the two presented in this paper)
return matrices that are much larger than $n\times n$. If we have a determinantal representation
with matrices larger than the degree of the polynomial, then the corresponding
two-parameter eigenvalue problem is singular, which means
that both matrix pencils (\ref{drugi}) are singular,
and we are dealing with a more difficult problem. 
There exists a numerical method from \cite{MuhicPlestenjakLAA}
that computes the regular eigenvalues of (\ref{eq:twopar}) 
from the common regular part of (\ref{drugi}).
For the generic singular case it is shown in \cite{MuhicPlestenjak09} that the regular
eigenvalues of (\ref{eq:twopar}) and (\ref{drugi}) do agree. 
For other types of singular two-parameter
eigenvalue problems the relation between the regular eigenvalues of  (\ref{eq:twopar})
and (\ref{drugi}) is not completely known, but the numerical examples
indicate that the method from \cite{MuhicPlestenjakLAA} can be successfully applied to such problems as well.
However, the numerical method, which is a variant of
a staircase algorithm \cite{Van_Dooren_singular}, has to make a lot of decisions on the numerical rank
and a single inaccurate decision can cause the method to fail.
As the size of the matrices increases, the gaps between singular values may numerically disappear and it may be difficult
to solve the problem.

This is not the only issue that prevents the use of determinantal representations
to solve a bivariate system. The algorithm for the singular two-parameter eigenvalue problems still
has complexity ${\cal O}(n_1^3 \, n_2^3)$, but the fast determinantal representations that we are
aware of return matrices of size ${\cal O}(n^2)$ instead of ${\cal O}(n)$. 
This is what pushes the overall complexity to ${\cal O}(n^{12})$ and makes this approach 
efficient only for polynomials of small degree. Nonetheless,
at complexity so high, each construction that gives a smaller determinantal representation can
make a change.
In view of this, we propose two new linearizations in the next two sections.

%%%%%%%%%%%%%%%%%%%%%%%%%%%%%%%%%%%%%%%%%%%%%%%%%%%%%%%%%%%%%%%%%%%%%
\section{First linearization}\label{sec:lin1}

We are interested in linearizations of the matrix polynomial
\[P(x,y)=P_{00} + x P_{10} + y P_{01} + \cdots + x^n P_{n0} + x^{n-1} y P_{n-1,1} + \cdots
+ y^n P_{0n}\]
of degree $n$, where $P_{ij}$ are square matrices. Our goal is to find square matrices $A,B$, and $C$ as small as possible such that
$\det(A+x B + y C)=\det(P(x,y))$. Also, we need a relation
that $P(x,y)u=0$ if and only if $(A+x B + y C)\underline u=0$, where $\underline u$ is
a tensor product of $u$ and a polynomial of $x$ and $y$. The linearization in this section
also applies to scalar bivariate polynomials, where all matrices are $1\times 1$ and $u=1$.

In Section~\ref{sec:detrep} we have given a linearization with block matrices of order $\frac{1}{2} n(n+1)$.
We can view this linearization in the following way. If $P(x,y)u=0$ for $u\ne 0$, then
$\det(A+x B +y C)\underline u=0$, where the vector $\underline u$ has the form
\begin{equation}\label{eq:blockvec}
\underline u  =
u\otimes \left[\begin{matrix}1&x & y&x^2&xy &y^2 &
\cdots &x^{n-1} &x^{n-2}y &\cdots &y^{n-1}\end{matrix}\right]^T.
\end{equation}
This means that $\underline u$ always begins with the initial block $u$ and then contains all blocks
of the form $x^jy^ku$ where $j+k\le n-1$. To simplify the presentation
we will usually omit $u$ when referring
to the blocks of the vector (\ref{eq:blockvec}). The blocks are ordered in the degree negative
lexicographic ordering, i.e., $x^ay^b\prec x^cy^d$ if $a+b<c+d$, or $a+b=c+d$ and $a>c$.

The above block structure of vector $\underline u$ is defined in the rows of the matrix 
from the second one to the last one (see Example \ref{ex:lin0}).
For each block $s=x^jy^k $ of (\ref{eq:blockvec}) such that $j+k\ge 1$ there always exists a preceding block $q$
of the grade $j+k-1$ such that either $s=x q$ or $s=y q$ (when $j\ge 1$ and $k\ge 1$ both options are possible).
Suppose that $s=x q$, ${\rm ind}(s)=i_s$, and ${\rm ind}(q)=i_q$, where
function ${\rm ind}$ returns the index of a block. Then the matrix $A+x B +y C$ has
block $-x I$ on position $(i_s,i_q)$ and block $I$ on position $(i_s,i_s)$. These are the only nonzero blocks
in the block row $i_s$. A similar construction with $-x I$ replaced by $-y I$ is used in the case $s=y q$.

The first block row of the matrix $A+x B +y C$ is used to represent the matrix polynomial $P(x,y)$.
One can see that
there exist linear pencils $A_{i1}+x B_{1i} + y C_{1i}$, $i=1,\ldots,m$, such that
\begin{equation}\label{eq:firstrow}
P(x,y)u = \left[\begin{matrix} A_{11}+x B_{11} +y C_{11} & A_{12}+x B_{12} +y C_{12} & \cdots &
A_{1m}+x B_{1m} +y C_{1m} \end{matrix}\right]
\underline{u},
\end{equation}
where $m=\frac{1}{2} n(n+1)$ is the number of blocks in (\ref{eq:blockvec}). The pencils in (\ref{eq:firstrow}) are
not unique. For instance, a term $x^jy^k P_{jk}$ of %the polynomial
$P(x,y)$ can
be represented in one of up to the three possible ways:
\begin{enumerate}
\item[a)] if $j+k<n$, we can set $A_{1p}=P_{jk}$ where $p={\rm ind}(x^jy^k )$,
\item[b)] if $j>0$, we can set $B_{1p}=P_{jk}$ where $p={\rm ind}(x^{j-1}y^k )$,
\item[c)] if $k>0$, we can set $C_{1p}=P_{jk}$ where $p={\rm ind}(x^{j}y^{k-1} )$.
\end{enumerate}
Based on the above discussion we see that not all the blocks in
(\ref{eq:blockvec}) are needed to represent a matrix polynomial $P(x,y)$. What we need
is a minimal set of monomials $x^jy^k $, where $j+k<n$, that is
sufficient for a matrix polynomial of degree $n$. We can formulate
the problem of finding the smallest possible set for a given polynomial as a graph
problem.

\begin{figure}[!htbp]
\begin{center}
\includegraphics{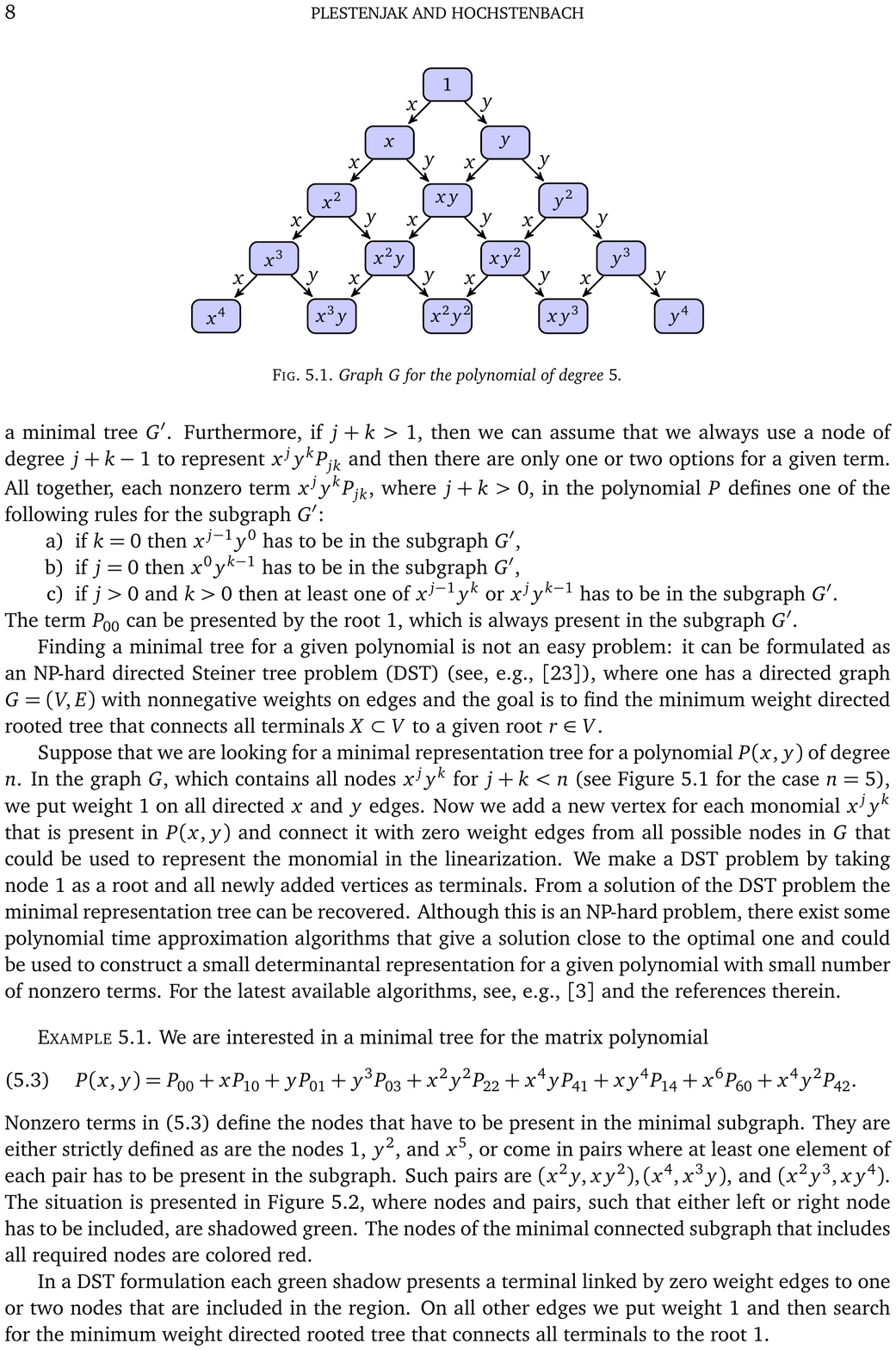}
\end{center}
%\vspace{-1em}
\caption{Graph $G$ for the polynomial of degree $5$.\label{fig:graph}}
\end{figure}

We can think about all possible terms $x^jy^k$, where $j+k<n$, as of nodes
in a directed graph $G$ with the root $1$ and
a directed edge from node $s$ to node $t$ if
$t=x s$ or $t=y s$ 
(see Figure~\ref{fig:graph} for the case $n=5$).
Now, we are looking for the smallest %possible
connected subgraph $G'$ with a root $1$ that can represent a
given polynomial. 
Equivalently, we are looking for a minimal directed rooted tree.
Let us remember that for each term $x^jy^kP_{jk}$ of the
polynomial $P(x,y)$ there are up to three possible nodes in the graph $G$ that can be
used to represent it. It is sufficient that one of these nodes is in a minimal
tree $G'$. Furthermore, if $j+k>1$, then we can assume that we always
use a node of degree $j+k-1$ to represent $x^jy^kP_{jk}$ and then
there are % always
only one or two options for a given term. 
All together, each nonzero term $x^jy^k P_{jk}$, where $j+k>0$, in the
polynomial $P$ defines one of the following rules for the subgraph $G'$:
\begin{itemize}
 \item[a)] if $k=0$ then %a node
 $x^{j-1}y^0$ has to be in the subgraph $G'$,
 \item[b)] if $j=0$ then %a node
 $x^{0}y^{k-1}$ has to be in the subgraph $G'$,
 \item[c)] if $j>0$ and $k>0$ then at least one of %the nodes
 $x^{j-1}y^k$ or
 $x^jy^{k-1}$ has to be in the subgraph $G'$.
\end{itemize}
The term $P_{00}$ can be presented by the root $1$,
which is always present in the subgraph $G'$.

Finding a minimal tree for a given polynomial is not an easy problem:
it can be formulated as an NP-hard directed Steiner tree problem (DST)
(see, e.g., \cite{Jungnickel}), where one has a directed graph $G=(V,E)$ with nonnegative
weights on edges and the goal is to find
the minimum weight directed rooted tree that connects all terminals $X\subset V$ to a given root
$r\in V$.

Suppose that we are looking for a minimal
representation tree for a polynomial $P(x,y)$ of degree $n$. In the graph $G$, which contains
all nodes $x^jy^k$ for $j+k<n$ (see Figure~\ref{fig:graph} for the case $n=5$), we put weight $1$
on all directed $x$ and $y$ edges. Now we add a new vertex for each monomial
$x^jy^k$ that is present in $P(x,y)$ and connect it with
zero weight edges from all possible nodes in $G$ that could be used to represent the monomial
in the linearization. We make a DST problem by taking node $1$ as a root and all newly added vertices as terminals.
From a solution of the DST problem the minimal representation tree can be recovered.
Although this is an NP-hard problem, there exist some polynomial time approximation algorithms that
give a solution close to the optimal one and could be used to construct
a small determinantal representation for a given polynomial with small number of nonzero terms.
For the latest available algorithms, see, e.g.,
\cite{Berman} and the references therein.

\bigskip\noindent
\begin{example}\label{ex:minlin}
{\rm
We are interested in a minimal tree for 
the matrix polynomial
\begin{equation}\label{eq:pol6}
P(x,y)=P_{00}+x P_{10} + y P_{01} + y^3 P_{03}+ x^2y^2 P_{22}+ x^4y P_{41}
+ xy^4 P_{14} + x^6 P_{60} +   x^4y^2 P_{42}.
\end{equation}
Nonzero terms in (\ref{eq:pol6}) define the nodes that have to be present in the minimal subgraph.
They are either strictly defined as are the nodes $1$, $y^2$, and $x^5$, or come in pairs
where at least one element of each pair has to be present in the subgraph. Such pairs
are $(x^2y,xy^2), (x^4,x^3y)$, and $(x^2y^3,xy^4)$.
The situation is presented in Figure~\ref{fig:graphex}, where nodes and pairs, such that either
left or right node has to be included, are shadowed green. The nodes of the minimal connected subgraph that
includes all required nodes are colored red.

In a DST 
formulation each green shadow presents a terminal linked by zero weight edges
to one or two nodes that are included in the region. On all other edges we put weight 1 and then
search
for the minimum weight directed rooted tree that connects all terminals to
the root $1$.

\begin{figure}[!htbp]
\begin{center}
\includegraphics{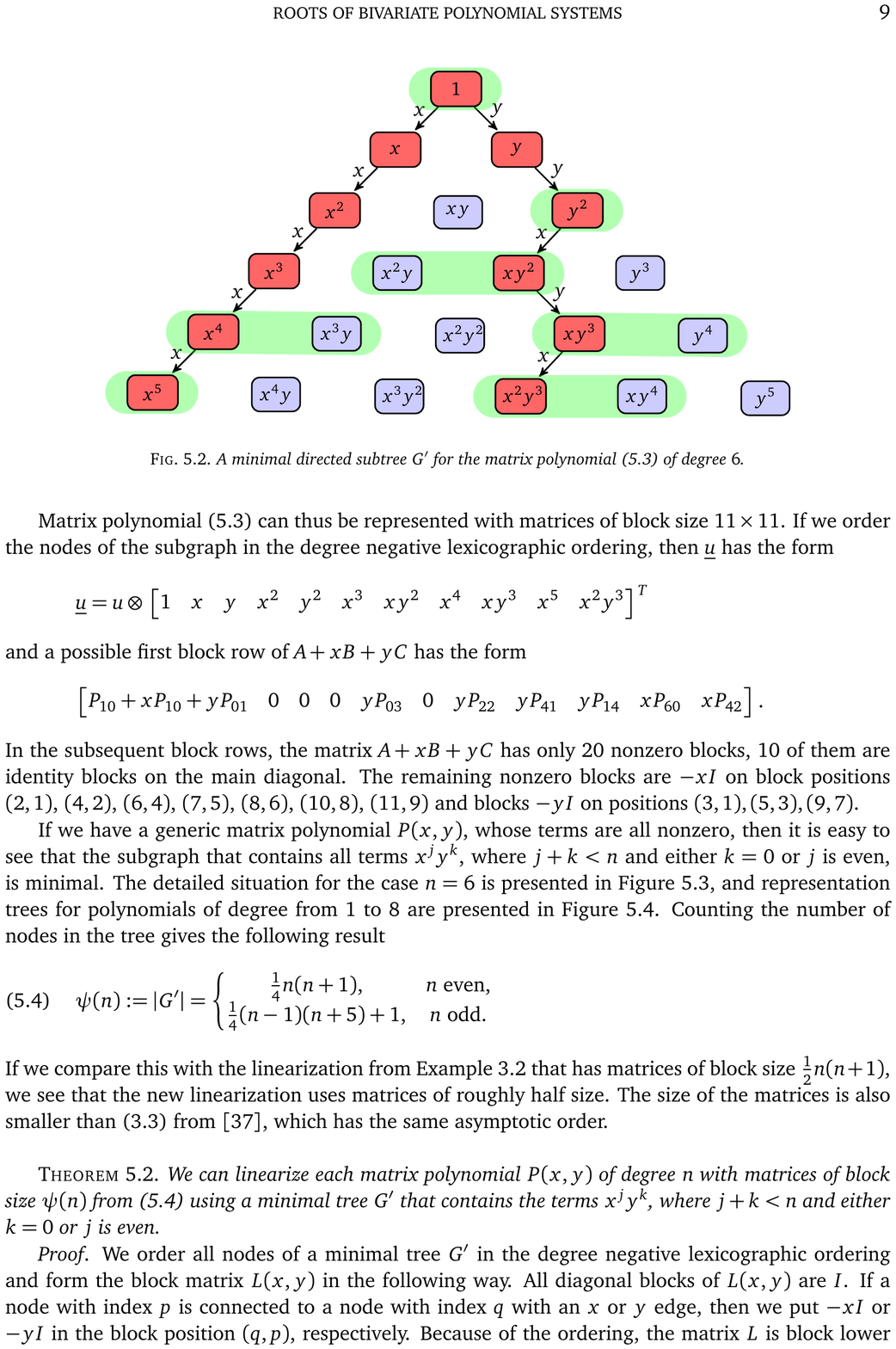}
\end{center}
\caption{A minimal directed subtree $G'$ for the matrix polynomial (\ref{eq:pol6}) of degree $6$.\label{fig:graphex}}
\end{figure}

Matrix polynomial (\ref{eq:pol6}) can thus be represented with matrices of block size $11\times 11$.
If we order the nodes of the subgraph in the degree negative lexicographic
ordering, then %the vector 
$\underline{u}$ has the form
\[\underline u  =
u\otimes \left[\begin{matrix}1&x&y&x^2&y^2 &x^3 &xy^2 & x^4 &xy^3
&x^5 &x^2y^3\end{matrix}\right]^T\]
and a possible first block row of $A+x B+y C$ has the form
\[
\left[\begin{matrix}P_{10}+x P_{10}+y P_{01} & 0 & 0 & 0 & y P_{03} & 0 & y P_{22} &
y P_{41} & y P_{14} & x P_{60} & x P_{42}\end{matrix}\right].\]
In the subsequent block rows, the matrix $A+x B+y C$ has only 20 nonzero blocks,
10 of them are identity blocks on the main diagonal. The remaining nonzero blocks
are $-x I$ on block positions $(2,1)$, $(4,2)$, $(6,4)$, $(7,5)$, $(8,6)$, $(10,8)$, $(11,9)$ and
blocks $-y I$ on positions $(3,1), (5,3), (9,7)$.
}
\end{example}

If we have a generic matrix polynomial $P(x,y)$, whose terms are all nonzero, then it is easy to see
that the 
subgraph that contains all terms 
$x^jy^k$, where $j+k<n$ and
either $k=0$ or $j$ is even,
is minimal. The detailed
situation for the case $n=6$ is presented in Figure~\ref{fig:mingraph},
and representation trees for polynomials of degree from 1 to 8 are presented in Figure~\ref{fig:lin1first8}.
Counting the number of nodes in the tree gives the following result
\begin{equation}\label{eq:sizeg}
\psi(n):=|G'|=\left\{\begin{matrix}\frac{1}{4} n(n+1), & n\ {\rm even},\cr
\frac{1}{4} (n-1)(n+5)+1, & n\ {\rm odd.}\end{matrix}\right.
\end{equation}
If we compare this with the linearization from Example~\ref{ex:lin0} that has matrices of block size $\frac{1}{2} n(n+1)$, we
see that the new linearization uses matrices of roughly half size. The
size of the matrices is also smaller than (\ref{eq:quarez}) from \cite{Quarez},
which has the same asymptotic order.

\begin{figure}[!htbp]
\begin{center}
\includegraphics{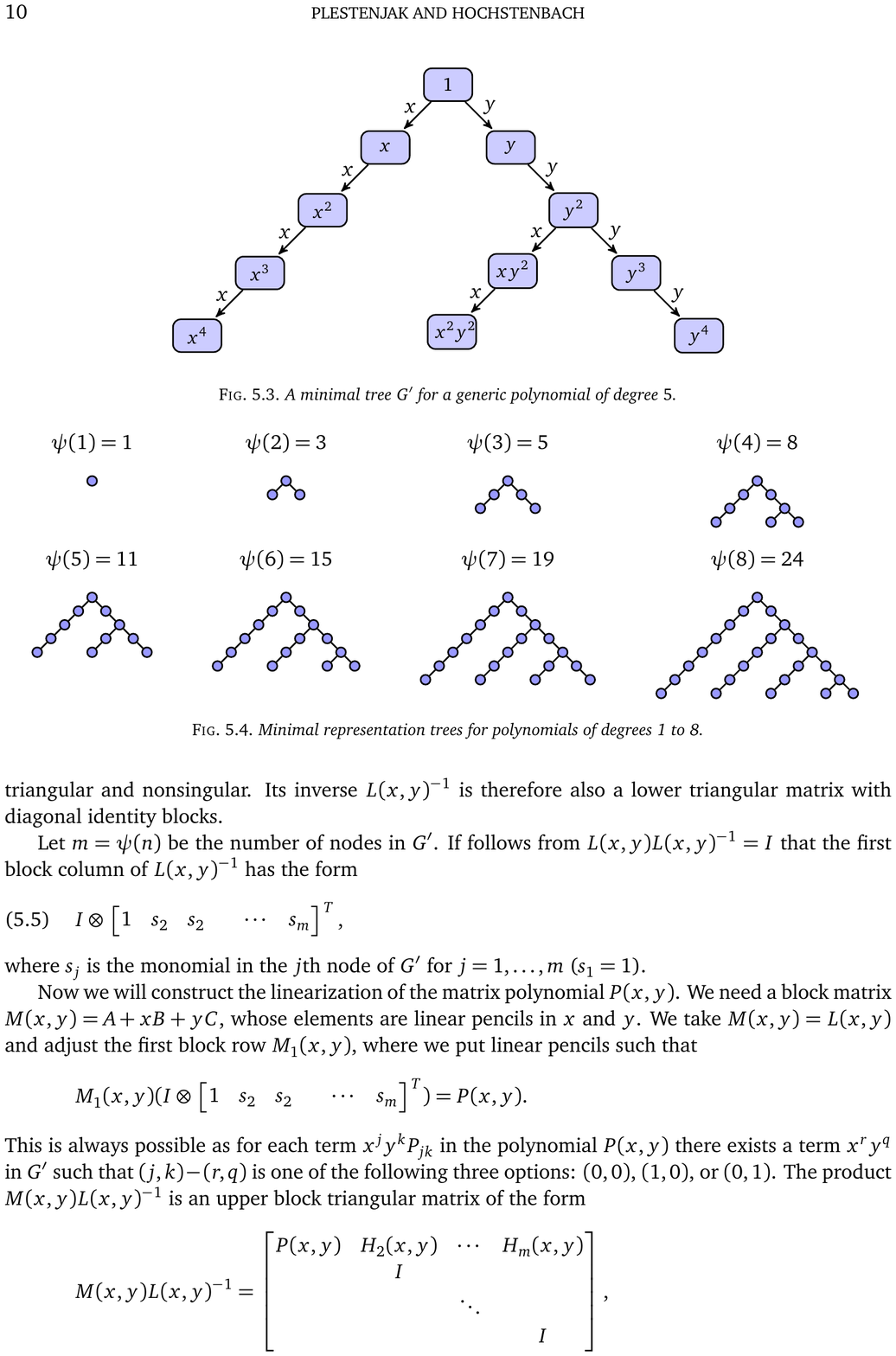}
\end{center}
\caption{A minimal tree $G'$ for a generic polynomial of degree $5$.\label{fig:mingraph}}
\end{figure}

\tikzstyle{tocka} = [circle, draw, fill=blue!40, inner sep=0pt, minimum size=5pt, node distance=0.35cm]

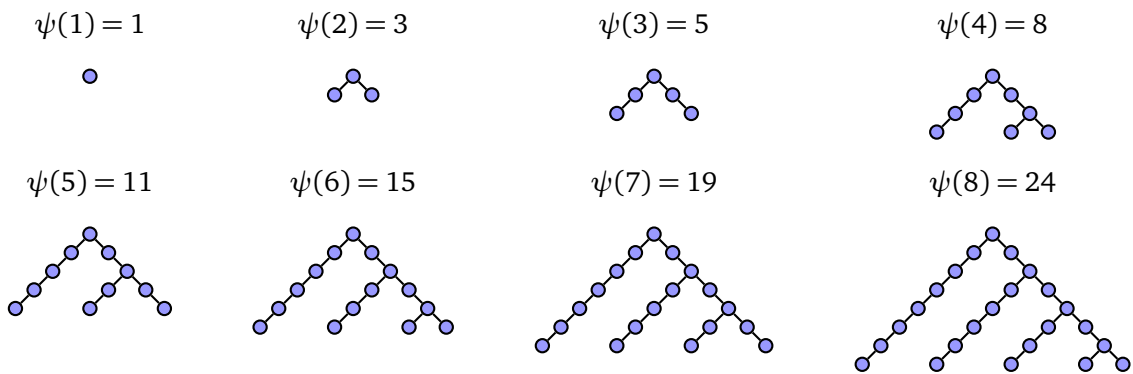
\begin{figure}[!htbp]
\begin{center}
\begin{tikzpicture}[auto,node distance=1cm,thick,main node/.style={tocka}]
  %% subgraph (n=1)
  \begin{scope}[]
  % graph for n=2
  \node[tocka] (00) {};
  \node [above=00,yshift=10pt] {$\psi(1)=1$};
  % draw lines
  \end{scope}

%% subgraph (n=2)
  \begin{scope}[xshift=3.5cm]
  % graph for n=2
  \node[tocka] (00) {};
  \node[tocka] (10) [below left of=00] {};
  \node[tocka] (01) [below right of=00] {};
  \node [above=00,yshift=10pt] {$\psi(2)=3$};
  % draw lines
  \draw[-] (00) to (10);
  \draw[-] (00) to (01);
  \end{scope}

%% subgraph (n=3)
  \begin{scope}[xshift=7.5cm]
  % graph for n=4
  \node[tocka] (00) {};
  \node[tocka] (10) [below left of=00] {};
  \node[tocka] (20) [below left of=10] {};
  \node[tocka] (01) [below right of=00] {};
  \node[tocka] (02) [below right of=01] {};
  \node [above=00,yshift=10pt] {$\psi(3)=5$};
  % draw lines
  \draw[-] (00) to (01) to (02);
  \draw[-] (00) to (10) to (20);
  \end{scope}

%% subgraph (n=4)
  \begin{scope}[xshift=12cm]
  % graph for n=4
  \node[tocka] (00) {};
  \node[tocka] (10) [below left of=00] {};
  \node[tocka] (20) [below left of=10] {};
  \node[tocka] (30) [below left of=20] {};
  \node[tocka] (01) [below right of=00] {};
  \node[tocka] (02) [below right of=01] {};
  \node[tocka] (03) [below right of=02] {};
  \node[tocka] (12) [below left of=02] {};
  \node [above=00,yshift=10pt] {$\psi(4)=8$};
  % draw lines
  \draw[-] (00) to (01) to (02) to (03);
  \draw[-] (00) to (10) to (20) to (30);
  \draw[-] (02) to (12);
  \end{scope}

  \begin{scope}[yshift=-2.1cm, xshift=12cm]
% graph for n=8
  \node[tocka] (00) {};
  \node[tocka] (10) [below left of=00] {};
  \node[tocka] (20) [below left of=10] {};
  \node[tocka] (30) [below left of=20] {};
  \node[tocka] (40) [below left of=30] {};
  \node[tocka] (50) [below left of=40] {};
  \node[tocka] (60) [below left of=50] {};
  \node[tocka] (70) [below left of=60] {};
  \node[tocka] (01) [below right of=00] {};
  \node[tocka] (02) [below right of=01] {};
  \node[tocka] (12) [below left of=02] {};
  \node[tocka] (22) [below left of=12] {};
  \node[tocka] (32) [below left of=22] {};
  \node[tocka] (42) [below left of=32] {};
  \node[tocka] (52) [below left of=42] {};
  \node[tocka] (03) [below right of=02] {};
  \node[tocka] (04) [below right of=03] {};
  \node[tocka] (05) [below right of=04] {};
  \node[tocka] (06) [below right of=05] {};
  \node[tocka] (16) [below left of=06] {};
  \node[tocka] (07) [below right of=06] {};
  \node[tocka] (15) [below left of=04] {};
  \node[tocka] (25) [below left of=15] {};
  \node[tocka] (35) [below left of=25] {};
  \node [above=00,yshift=10pt] {$\psi(8)=24$};
  % draw lines
  \draw[-] (00) to (01) to (02) to (03) to (04) to (05) to (06) to (07);
  \draw[-] (00) to (10) to (20) to (30) to (40) to (50) to (60) to (70);
  \draw[-] (02) to (12) to (22) to (32) to (42) to (52);
  \draw[-] (04) to (15) to (25) to (35);
  \draw[-] (06) to (16);
  \end{scope}

  %% subgraph (n=7)
  \begin{scope}[yshift=-2.1cm, xshift=7.5cm]
  % graph for n=7
  \node[tocka] (00) {};
  \node[tocka] (10) [below left of=00] {};
  \node[tocka] (20) [below left of=10] {};
  \node[tocka] (30) [below left of=20] {};
  \node[tocka] (40) [below left of=30] {};
  \node[tocka] (50) [below left of=40] {};
  \node[tocka] (60) [below left of=50] {};
  \node[tocka] (01) [below right of=00] {};
  \node[tocka] (02) [below right of=01] {};
  \node[tocka] (12) [below left of=02] {};
  \node[tocka] (22) [below left of=12] {};
  \node[tocka] (32) [below left of=22] {};
  \node[tocka] (42) [below left of=32] {};
  \node[tocka] (03) [below right of=02] {};
  \node[tocka] (04) [below right of=03] {};
  \node[tocka] (05) [below right of=04] {};
  \node[tocka] (06) [below right of=05] {};
  \node[tocka] (15) [below left of=04] {};
  \node[tocka] (25) [below left of=15] {};
  \node [above=00,yshift=10pt] {$\psi(7)=19$};
  % draw lines
  \draw[-] (00) to (01) to (02) to (03) to (04) to (05) to (06);
  \draw[-] (00) to (10) to (20) to (30) to (40) to (50) to (60);
  \draw[-] (04) to (15) to (25);
  \draw[-] (02) to (12) to (22) to (32) to (42);
  \end{scope}

  %% subgraph (n=6)
  \begin{scope}[yshift=-2.1cm, xshift=3.5cm]
  % graph for n=6
  \node[tocka] (00) {};
  \node[tocka] (10) [below left of=00] {};
  \node[tocka] (20) [below left of=10] {};
  \node[tocka] (30) [below left of=20] {};
  \node[tocka] (40) [below left of=30] {};
  \node[tocka] (50) [below left of=40] {};
  \node[tocka] (01) [below right of=00] {};
  \node[tocka] (02) [below right of=01] {};
  \node[tocka] (12) [below left of=02] {};
  \node[tocka] (22) [below left of=12] {};
  \node[tocka] (32) [below left of=22] {};
  \node[tocka] (03) [below right of=02] {};
  \node[tocka] (04) [below right of=03] {};
  \node[tocka] (14) [below left of=04] {};
  \node[tocka] (05) [below right of=04] {};
  \node [above=00,yshift=10pt] {$\psi(6)=15$};
  % draw lines
  \draw[-] (00) to (01) to (02) to (03) to (04) to (05);
  \draw[-] (00) to (10) to (20) to (30) to (40) to (50);
  \draw[-] (02) to (12) to (22) to (32);
  \draw[-] (04) to (14);
  \end{scope}

  %% subgraph (n=5)
  \begin{scope}[yshift=-2.1cm]
  % graph for n=4
  \node[tocka] (00) {};
  \node[tocka] (10) [below left of=00] {};
  \node[tocka] (20) [below left of=10] {};
  \node[tocka] (30) [below left of=20] {};
  \node[tocka] (40) [below left of=30] {};
  \node[tocka] (01) [below right of=00] {};
  \node[tocka] (02) [below right of=01] {};
  \node[tocka] (12) [below left of=02] {};
  \node[tocka] (22) [below left of=12] {};
  \node[tocka] (03) [below right of=02] {};
  \node[tocka] (04) [below right of=03] {};
  \node [above=00,yshift=10pt] {$\psi(5)=11$};
  % draw lines
  \draw[-] (00) to (01) to (02) to (03) to (04);
  \draw[-] (00) to (10) to (20) to (30) to (40);
  \draw[-] (02) to (12) to (22);
  \end{scope}
\end{tikzpicture}
\vspace{-0.5em}
\end{center}
\caption{Minimal representation trees for polynomials of degrees 1 to 8.\label{fig:lin1first8}}
\end{figure}

\bigskip\noindent
\begin{theorem}\label{thm:one}
We can linearize each matrix polynomial $P(x,y)$ of degree $n$ with
matrices of block size $\psi(n)$ from (\ref{eq:sizeg}) using a minimal tree $G'$ that
contains the terms $x^jy^k$, where $j+k<n$ and either $k=0$ or $j$ is even.
\end{theorem}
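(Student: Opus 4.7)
The plan is to verify that (i) the subgraph $G'$ is a rooted tree containing a node capable of representing each monomial of $P$ via one of the rules (a)--(c) of Section~\ref{sec:lin1}, (ii) $|G'|=\psi(n)$, and (iii) the associated block pencil is a linearization of $P$.

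For (i), each non-root node $x^jy^k\in G'$ has a unique predecessor in $G'$: namely $x^{j-1}$ when $k=0$, and $x^jy^{k-1}$ when $k\ge 1$ (in the latter case $j$ is even, so $x^jy^{k-1}\in G'$). Thus $G'$ consists of the $x$-spine $1\to x\to\cdots\to x^{n-1}$ together with, for each even $j$ with $0\le j<n$, the $y$-chain $x^j\to x^jy\to\cdots\to x^jy^{n-1-j}$. A case split on the parity of $j$ shows every term $x^jy^kP_{jk}$ with $j+k\le n$ is representable: $P_{00}$ sits at the root; for $k=0,\,j\ge1$ rule (a) uses $x^{j-1}\in G'$; for $j=0,\,k\ge1$ rule (b) uses $y^{k-1}\in G'$ (since $j=0$ is even); for $j,k\ge1$ rule (c) uses $x^jy^{k-1}\in G'$ when $j$ is even, and $x^{j-1}y^k\in G'$ when $j$ is odd (since then $j-1$ is even). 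For (ii), the $x$-spine contributes $n$ nodes and each even-$j$ $y$-chain contributes $n-1-j$ nodes; summing and splitting on the parity of $n$ reproduces $\psi(n)$ from (\ref{eq:sizeg}).

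For (iii), order the nodes of $G'$ so that every parent precedes its children. For each non-root $v$ with parent $p(v)$ and incoming edge label $\epsilon_v\in\{x,y\}$, put $I_k$ at block position $(v,v)$ and $-\epsilon_v I_k$ at $(v,p(v))$; the first block row $[\,A_{1,v}+xB_{1,v}+yC_{1,v}\,]_{v\in G'}$ is defined by the placement of each monomial of $P$ from~(i). With $\underline u=(x^{j(v)}y^{k(v)}u)_{v\in G'}$, the non-first rows give $\underline u_v=\epsilon_v\,\underline u_{p(v)}$, and the first row evaluates to $P(x,y)u$. To obtain the determinantal identity $\det(A+xB+yC)=\det P(x,y)$ together with the unimodular equivalence $L(x,y)\,(A+xB+yC)\,Q(x,y)=\mathrm{diag}(P(x,y),I_{k(\psi(n)-1)})$, I would induct on $|G'|$ by processing leaves of the tree. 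A leaf $v$ has $I_k$ on its diagonal, so the Schur complement removes row and column $v$ while adding $\epsilon_v\cdot(\text{first-row entry at column }v)$ to the first-row entry at column $p(v)$; each such step is realized by multiplication with elementary unimodular polynomial matrices. The reduced pencil has exactly the same form on the smaller tree $G'\setminus\{v\}$, so the inductive hypothesis applies; after all leaves are successively removed, only the $(1,1)$ block remains and it equals $P(x,y)$.

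The main obstacle is the bookkeeping in this induction. Each monomial $x^jy^kP_{jk}$ is initially placed at the unique node $v=x^{j'}y^{k'}\in G'$ prescribed by (a)--(c) with factor $\eta_v\in\{1,x,y\}$, and the successive Schur eliminations multiply $\eta_v\,P_{jk}$ by the path product $\prod_w\epsilon_w=x^{j'}y^{k'}$ from $v$ to the root, so the final contribution is $\eta_v\,x^{j'}y^{k'}P_{jk}=x^jy^kP_{jk}$; distinct placements land on distinct monomials of $P$, so the sum reconstructs $P(x,y)$ without cancellation. The minimality of $G'$ in the generic case, when all $P_{ij}\ne 0$, then follows because every node of $G'$ is forced either as a direct placement or as a required predecessor of some nonzero term under (a)--(c), so no node can be removed without leaving a monomial of $P$ without an admissible placement.
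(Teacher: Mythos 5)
Your proposal is correct, and its core mechanism is the same as the paper's, but you organize the unimodular reduction differently. The paper's proof is a one-shot factorization: it takes $L(x,y)$ to be the unipotent block lower triangular matrix encoding the tree edges, observes that the first block column of $L(x,y)^{-1}$ is $I\otimes\left[\begin{smallmatrix}1 & s_2 & \cdots & s_m\end{smallmatrix}\right]^T$ with $s_j$ the monomial at node $j$ (which is exactly your ``path product from $v$ to the root equals $s_v$'' observation), so that $M(x,y)L(x,y)^{-1}$ is upper block triangular with $P(x,y)$ in the $(1,1)$ corner, and then clears the remaining first-row blocks with a single unimodular $U(x,y)$. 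You instead peel off leaves one at a time by Schur complements realized as elementary row/column operations; composing these elementary matrices yields precisely the paper's $U$ and $L^{-1}$, so your induction is a legitimate, slightly more laborious route to the same unimodular equivalence, and your cancellation-free bookkeeping ($\eta_v\,x^{j'}y^{k'}P_{jk}=x^jy^kP_{jk}$) substitutes for the paper's direct statement that $M_1(x,y)$ can be chosen with $M_1(x,y)(I\otimes[1\ s_2\ \cdots\ s_m]^T)=P(x,y)$. What your write-up buys is an explicit verification of parts the paper merely asserts: that every term of $P$ admits a placement under rules (a)--(c) and that the node count matches. Two small caveats. First, your count gives $n+\tfrac{n^2}{4}=\tfrac14 n(n+4)$ nodes for even $n$, which agrees with the values $3,8,15,24$ in Figure~\ref{fig:lin1first8} but not with the printed formula $\tfrac14 n(n+1)$ in (\ref{eq:sizeg}); that formula appears to contain a typo, so do not force your (correct) sum to match it. Second, your closing minimality argument only shows that no node of this particular $G'$ can be deleted (local minimality), not that no smaller representing tree exists; the paper likewise only asserts minimality without proof, and it is not needed for the linearization claim, so you should either flag it as unproved or omit it.
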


\begin{proof} We order all nodes of a minimal tree $G'$ in
the degree negative lexicographic ordering and form the
block matrix $L(x,y)$ in the following way. All diagonal blocks
of $L(x,y)$ are $I$. If a node with index $p$ is connected to a node with index $q$
with an $x$ or $y$ edge, then we put $-x I$ or $-y I$
in the block position $(q,p)$, respectively. Because of the ordering, the
matrix $L$ is block lower triangular and nonsingular. Its inverse $L(x,y)^{-1}$
is therefore also a lower triangular matrix with diagonal identity blocks.

Let $m=\psi(n)$ be the number of nodes in $G'$. If follows from $L(x,y)L(x,y)^{-1}=I$ that
the first block column of $L(x,y)^{-1}$ has the form
\begin{equation}\label{eq:blockinv}
I\otimes \left[\begin{matrix}1& s_2 & s_2&  &\cdots &s_m\end{matrix}\right]^T,
\end{equation}
where $s_j$ is the monomial in the $j$th node of $G'$ for $j=1,\ldots,m$ ($s_1=1$).

Now we will construct the linearization of the matrix polynomial $P(x,y)$.
We need a block matrix $M(x,y)=A+x B+y C$, whose elements are linear pencils in $x$ and $y$.
We take $M(x,y)=L(x,y)$ and adjust the first block row $M_1(x,y)$, where
we put linear pencils such that
\[M_1(x,y)( I\otimes \left[\begin{matrix}1& s_2 & s_2&  &\cdots &s_m\end{matrix}\right]^T)=P(x,y).\]
This is always possible as for each term $x^jy^k P_{jk}$ in the polynomial $P(x,y)$
there exists a term $x^ry^q$ in $G'$ such that $(j,k)-(r,q)$ is one of the following three options: $(0,0)$, $(1,0)$, or $(0,1)$.
The product $M(x,y)L(x,y)^{-1}$ is an upper block triangular matrix of the form
\[M(x,y)L(x,y)^{-1}=\left[\begin{matrix}P(x,y) & H_2(x,y) & \cdots & H_m(x,y) \cr
 & I & &  \cr
 & & \ddots & \cr
 & & & I\end{matrix}\right],\]
 where $H_2(x,y),\ldots,H_m(x,y)$ are matrix polynomials.
 If we introduce the matrix polynomial \[U(x,y)=
 \left[\begin{matrix}I & -H_2(x,y) & \cdots & -H_m(x,y) \cr
 & I & &  \cr
 & & \ddots & \cr
 & & & I\end{matrix}\right],\]
 then it follows that
 \[U(x,y) \, M(x,y) \, L(x,y)^{-1}=
 \left[\begin{matrix}P(x,y) & & & \cr & I & &\cr & & \ddots & \cr & & & I\end{matrix}\right]\]
 and since $\det(L(x,y))\equiv \det(U(x,y))\equiv 1$,
 this proves % (see, e.g., \cite{Psarrakos})
 that $M(x,y)=A+x B+y C$ is indeed a linearization of the matrix polynomial $P(x,y)$.
 \end{proof}

\bigskip\noindent
\begin{example}\label{ex::cubic1}
{\rm
As an example we consider the scalar bivariate polynomial
\[p(x,y)=1+2 x+3 y+4 x^2+5xy+6y^2+7x^3+8x^2y+9xy^2+10y^3,\]
which was already linearized in \cite{MuhicPlestenjak09} with matrices of size $6\times 6$ (we can also get
a $6\times 6$ linearization if we insert the coefficients in matrix (\ref{eq:ex2}) of Example~\ref{ex:lin0}).
Now we can linearize
it with matrices of size $5\times 5$ as $p(x,y)=\det(A+x B+y C)$, where
\[
  A+ x B + y C=
  {\footnotesize\left[
  \begin{array}{cccccc}
   1+ 2x +3y & 4x + 5y & 6y & 7 x + 8 y & 9x + 10y \cr
   -x & 1 & 0 & 0 & 0 \cr
   -y & 0 & 1 & 0 & 0  \cr
   0 & -x & 0 & 1 & 0 \cr
   0 & 0 & -y & 0 & 1
  \end{array}\right]}.\]
In the next section we will further reduce the size of the matrices
to $4 \times 4$ and $3 \times 3$. % by means of a second linearization.
}
\end{example}

%%%%%%%%%%%%%%%%%%%%%%%%%%%%%%%%%%%%%%%%%%%%%%%%%%%%%%%%%%%%%%%%%%%%%
\section{Second linearization}\label{sec:lin2}

We will upgrade the approach from the previous section and
produce even smaller representations for scalar polynomials. As before,
representations have a form of the directed tree, but instead of using
only $x$ and $y$, an edge can now be any linear polynomial
$\alpha x +\beta y$ such that $(\alpha,\beta)\ne (0,0)$. These
additional parameters give us enough freedom to produce smaller
representations. The root is still $1$ while
the other nodes are polynomials in $x$ and $y$ that are products of
all edges on the path from the root to the node.
In each node all monomials have the same degree, which
is equal to
the graph distance to the root.
Before we continue with the
construction, we give a small example to clarify the idea.

\tikzstyle{cloud3} = [ellipse, draw, fill=red!20, font={\sffamily\footnotesize},
    text width=4.2em, text centered, rounded corners, minimum width=4.1em, minimum height=1em, node distance=2cm]
\tikzstyle{cloud} = [rectangle, draw, fill=red!20, font={\sffamily\small},
    text width=1.6em, text centered, rounded corners, minimum width=1.6em, minimum height=1.4em, node distance=1.6cm]

\begin{figure}[!htbp]
\begin{center}
\includegraphics{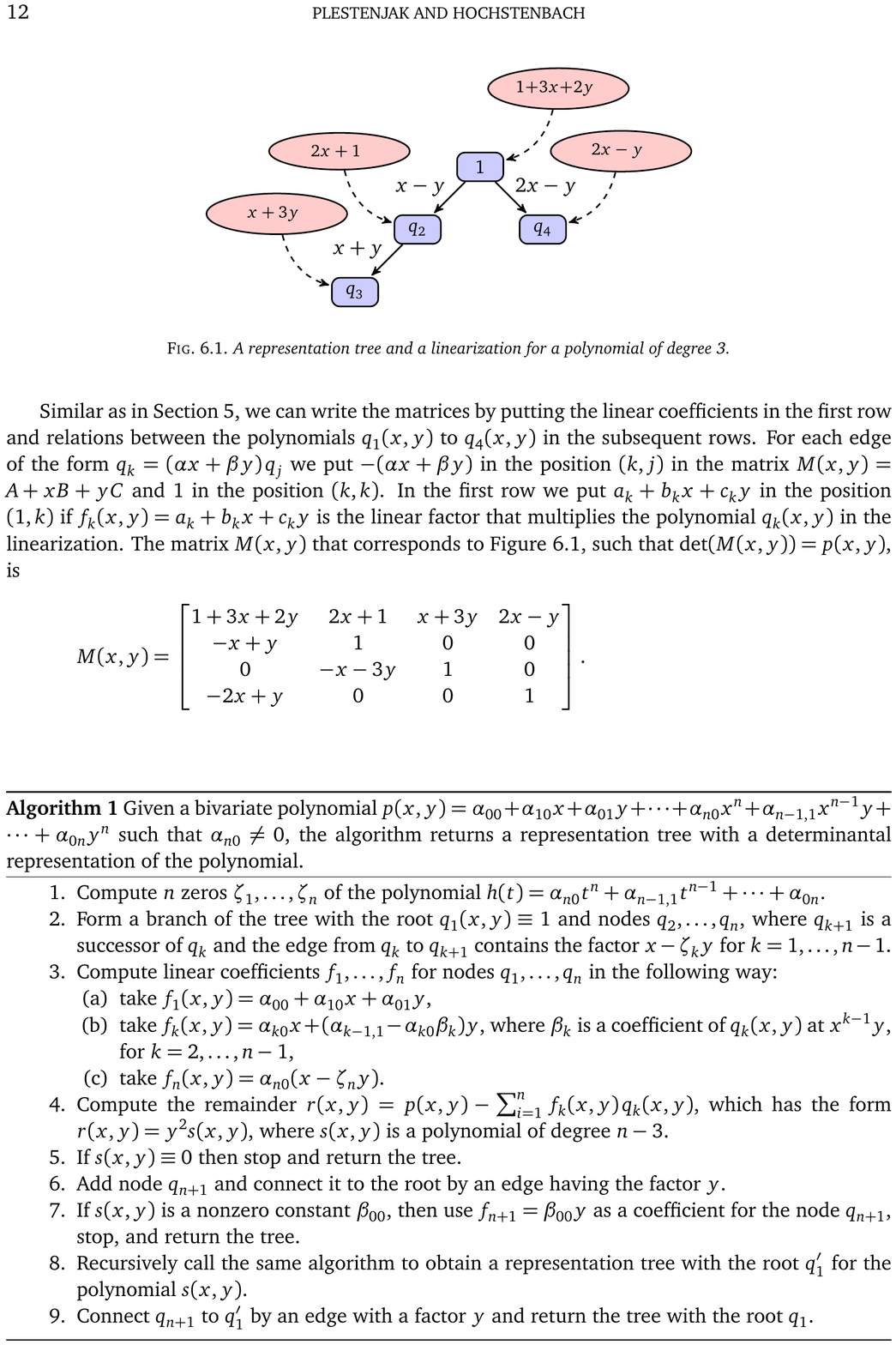}
\end{center}
\caption{A representation tree and a linearization for a polynomial of degree 3.\label{fig:ex4}}
\end{figure}

\noindent
\begin{example}\label{ex:lin2}
{\rm A linearization of a polynomial of degree $3$ with matrices of size $4\times 4$
is presented in Figure~\ref{fig:ex4}. Let us
explain the figure and show how to
produce the matrices from the representation tree.
The nodes in the representation tree are the following polynomials:
$$\begin{array}{lcl}
q_1(x,y)=1, & &
q_2(x,y)= (x-y) \, q_1(x,y) = x-y,\\[1mm]
q_3(x,y)= (x+y) \, q_2(x,y) = x^2-y^2, & &
q_4(x,y)= (2x-y) \, q_1(x,y) = 2x -y.
\end{array}
$$
The polynomial of degree $3$ is then a linear combination of
nodes in the representation tree and coefficients $f_1,\ldots,f_4$ which are polynomials
of degree $1$ contained in the ellipses. This gives
\begin{align*}
p(x,y) &= (1+3x+2y) \, q_1(x,y) + (2x+1) \, q_2(x,y) + (x+3y) \, q_3(x,y) + (2x-y) \, q_4(x,y)\\
       &=1 + 4x + y +6x^2 -6xy + y^2 + x^3 + 3x^2y - xy^2 - 3y^3.
\end{align*}

Similar as in Section~\ref{sec:lin1}, we can write the matrices by putting the linear coefficients
in the first row and relations between the polynomials $q_1(x,y)$ to $q_4(x,y)$ in the subsequent rows.
For each edge of the form $q_k=(\alpha x+\beta y) \, q_j$ we put
$-(\alpha x+\beta y)$ in the position $(k,j)$ in the matrix $M(x,y)=A+x B +y C$ and
$1$ in the position $(k,k)$.
In the first row we put $a_k+b_k x+c_k y$ in the position $(1,k)$ if
$f_k(x,y)=a_k+b_k x+c_k y$ is the linear factor that multiplies the polynomial $q_k(x,y)$
in the linearization.
The matrix $M(x,y)$ that corresponds to Figure \ref{fig:ex4}, such that $\det(M(x,y))=p(x,y)$, is
\[M(x,y)=\left[\begin{matrix} 1+3x+2y & 2x+1 & x+3y & 2x -y \cr
-x + y & 1 & 0 & 0 \cr
0 & -x -3y &  1 & 0\cr
-2x + y & 0 & 0 & 1\end{matrix}\right].\]
%with $\det(M(x,y))=p(x,y)$.
}
\end{example}
\bigskip

\begin{algorithm}[!htbp]
\caption{Given a bivariate 
polynomial
$p(x,y)=\alpha_{00}+\alpha_{10}x+\alpha_{01}y+\cdots + \alpha_{n0}x^n+\alpha_{n-1,1}x^{n-1}y +\cdots
+\alpha_{0n}y^n$
such that $\alpha_{n0}\ne 0$, the algorithm returns a representation tree with a
determinantal representation of the polynomial.\label{alg:lin2}}
\begin{enumerate}
\item Compute $n$ zeros $\zeta_1,\ldots,\zeta_n$ of the polynomial
$h(t)=\alpha_{n0}t^n+\alpha_{n-1,1}t^{n-1} +\cdots + \alpha_{0n}$.
\item Form a branch of the tree with the root $q_1(x,y)\equiv 1$ and
nodes $q_2,\ldots,q_n$, where $q_{k+1}$ is a successor of $q_k$ and the
edge from $q_k$ to $q_{k+1}$ contains the factor $x-\zeta_k y$ for
$k=1,\ldots,n-1$.
\item Compute linear coefficients $f_1,\ldots,f_n$ for nodes $q_1,\ldots,q_n$
in the following way:
\begin{enumerate}
\item take $f_1(x,y)=\alpha_{00}+\alpha_{10}x+\alpha_{01}y$,
\item take $f_k(x,y)=\alpha_{k0}x + (\alpha_{k-1,1}-\alpha_{k0}\beta_k)y$,
where $\beta_k$ is a coefficient of $q_k(x,y)$ at $x^{k-1}y$, for $k=2,\ldots,n-1$,
\item take $f_n(x,y)=\alpha_{n0}(x-\zeta_n y)$.
\end{enumerate}
\item Compute the remainder $r(x,y)=p(x,y)-\sum_{i=1}^n \, f_k(x,y) \, q_k(x,y),$
which has the form $r(x,y)=y^2 s(x,y)$, where $s(x,y)$ is a polynomial of
degree $n-3$.
\item If $s(x,y)\equiv 0$ then stop and return the tree.
\item Add node $q_{n+1}$ and connect it to the root by an edge having the factor $y$.
\item If $s(x,y)$ is a nonzero constant $\beta_{00}$, then use $f_{n+1}=\beta_{00}y$ as a coefficient
for the node $q_{n+1}$, stop, and return the tree.
\item Recursively call the same algorithm to obtain a representation tree with
the root $q_1'$ for
the polynomial $s(x,y)$.
\item Connect $q_{n+1}$ to $q_1'$ by an edge with a factor $y$ and return the tree
with the root $q_1$.
\end{enumerate}
\end{algorithm}

In Example \ref{ex:lin2} we showed how to construct the bivariate pencil
 $M(x,y)=A+x B + yC$ from a representation tree
and the corresponding linear coefficients. The outline of an algorithm
that constructs a representation tree and the corresponding linear coefficients
for a given polynomial $p(x,y)$
is presented in Algorithm~\ref{alg:lin2}. In the following discussion we give some missing details and
show that the algorithm indeed gives a linearization.

\begin{itemize}

\item The nodes $q_2,\ldots,q_n$ that we construct in Step 2 are polynomials of the form
$q_k(x,y)=(x-\zeta_1 y)\cdots (x-\zeta_{k-1} y)$ for $k=2,\ldots,n$.
All monomials in $q_k$ have degree $k-1$ and the leading term is $x^{k-1}$.

\item Each product $q_k(x,y)f_k(x,y)$ for $k=2,\ldots,n$ is a polynomial with monomials
of exact degree $k$, while $q_1(x,y)f_1(x,y)$ is a polynomial of degree $1$.
The linear factors $f_k(x,y)$ in Step 3 are constructed so that:
\begin{itemize}
\item leading two monomials ($x^k$ and $x^{k-1}y$) of $f_k(x,y) \, q_k(x,y)$ agree with
the part $\alpha_{k0}x^k+\alpha_{k-1,1}x^{k-1}y$ of the polynomial $p(x,y)$ for
$k=2,\ldots,n-1$,
\item the product $f_n(x,y) \, q_n(x,y)=a_{n0}(x-\zeta_1y)\cdots(x-\zeta_ny)$ agrees with
the part of $p(x,y)$ composed of all monomials of degree exactly $n$,
\item the product $q_1(x,y) \, f_1(x,y)=a_{00}+a_{10}x+a_{01}y$ agrees with the
part of $p(x,y)$ composed of all monomials of degree up to $1$.
\end{itemize}
As a result, the remainder in Step 4 has the form $y^2s(x,y)$, where $s(x,y)$ is
a polynomial of degree $n-3$. The situation at the end of Step 4 is presented in
Figure~\ref{fig:lin22}.

\begin{figure}[!htbp]
\begin{center}
\includegraphics{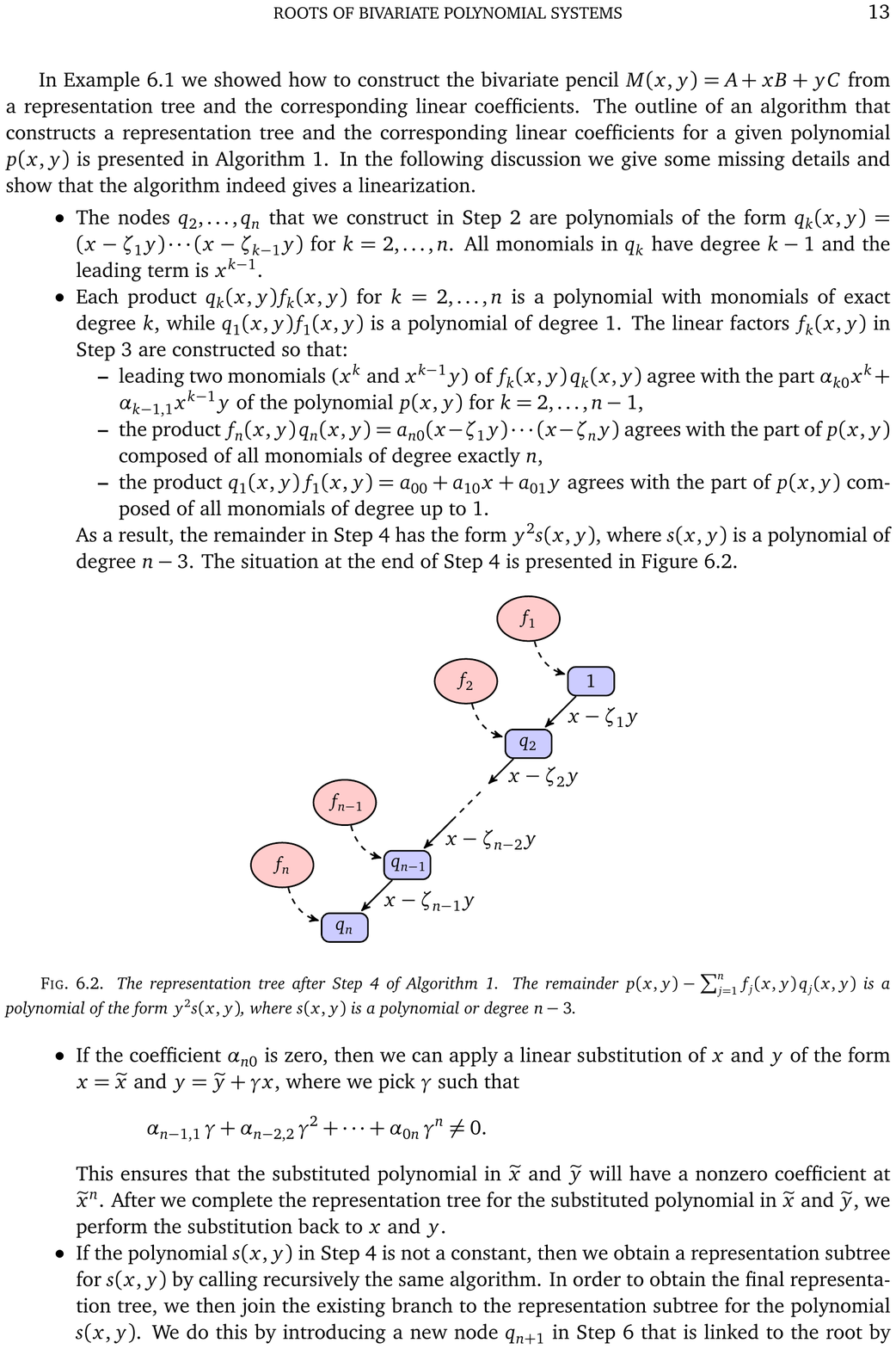}
\end{center}
\caption{The representation tree after Step 4 of Algorithm~\ref{alg:lin2}. The
remainder $p(x,y)-\sum_{j=1}^{n}f_j(x,y) \, q_j(x,y)$ is a polynomial
of the form $y^2 s(x,y)$, where $s(x,y)$ is a polynomial or degree $n-3$.\label{fig:lin22}}
\end{figure}

\item If the coefficient $\alpha_{n0}$ is zero, then we can apply a linear substitution
of $x$ and $y$ of the form $x=\widetilde x$ and $y=\widetilde y + \gamma x$, where we
pick $\gamma$ such that
\[\alpha_{n-1,1} \, \gamma + \alpha_{n-2,2} \, \gamma^2+\cdots +\alpha_{0n} \, \gamma^n\ne 0.\]
This ensures that the substituted polynomial in $\widetilde x$ and $\widetilde y$ will
have a nonzero coefficient at $\widetilde x^n$.
After we complete the representation tree for the substituted
polynomial in $\widetilde x$ and $\widetilde y$,
we perform the substitution back to $x$ and $y$.

\item If the polynomial $s(x,y)$ in Step 4 is not a constant, then we
obtain a representation subtree for $s(x,y)$
by calling recursively the same algorithm. In order to obtain the final representation tree, 
we then join the existing branch to the representation subtree for
the polynomial $s(x,y)$. We do this by introducing a new node $q_{n+1}$ in Step 6
that is linked to the root by the edge with the factor $y$.
To this new node we link the root $q_1'$ of the subtree for the polynomial $s(x,y)$ in Step 9, again using the
edge with the factor $y$. As $q_1'$ is linked to the root by two edges $y$, this multiplies all
nodes in the subtree by $y^2$ and, since the subtree is
a representation for $s(x,y)$, this gives
a representation for the remainder $r(x,y)$ from Step 4.
The situation after Step 9 with the final representation tree for the
polynomial $p(x,y)$ is presented in Figure~\ref{fig:lin23}.
\end{itemize}

\begin{figure}[!htbp]
\begin{center}
\includegraphics{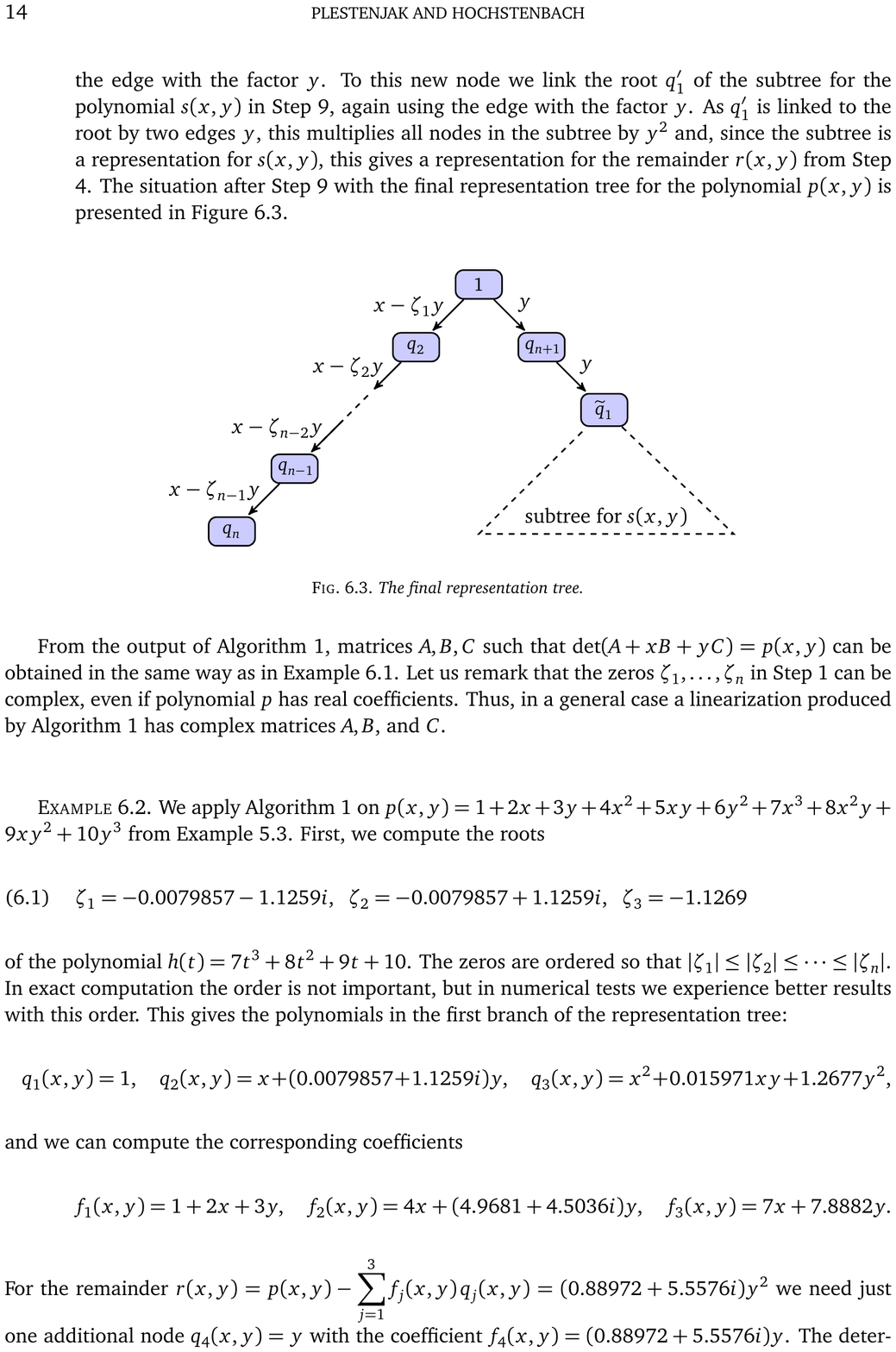}
\end{center}
\caption{The final representation tree.\label{fig:lin23}}
\end{figure}

From the output of Algorithm~\ref{alg:lin2}, matrices
$A,B,C$ such that $\det(A+xB+yC)=p(x,y)$ can be obtained in the same way as in Example~\ref{ex:lin2}.
Let us remark that the zeros $\zeta_1,\ldots,\zeta_n$ in Step 1 
can be complex, even if polynomial $p$ has real
coefficients. Thus, in a general case a linearization produced by Algorithm~\ref{alg:lin2}
has complex matrices $A,B$, and $C$.

\bigskip\noindent
\begin{example}\label{ex::pol2}
{\rm
We apply Algorithm~\ref{alg:lin2} on
$p(x,y)=1+2 x+3 y+4 x^2+5xy+6y^2+7x^3+8x^2y+9xy^2+10y^3$
from Example \ref{ex::cubic1}.
First, we compute the roots
\begin{equation}\label{eq::roots2}
\zeta_1=  -0.0079857 - 1.1259i,\ \
  \zeta_2=  -0.0079857 + 1.1259i,\ \
  \zeta_3=  -1.1269
\end{equation}
of the polynomial $h(t)=7t^3+8t^2+9t+10$.
The zeros are ordered so that $|\zeta_1|\le |\zeta_2|\le \cdots \le |\zeta_n|$.
In exact computation the order is not important, but in numerical
tests we experience better results with this order.
This gives the polynomials in the first branch of the representation tree:
\[q_1(x,y)  = 1,\quad
q_2(x,y)  = x + ( 0.0079857 + 1.1259i) y,\quad
q_3(x,y)  = x^2 + 0.015971 x y + 1.2677 y^2,\]
and we can compute the corresponding coefficients
\[f_1(x,y)  = 1 + 2x + 3y, \quad
f_2(x,y)  = 4x + (4.9681+ 4.5036i) y,\quad
f_3(x,y)  = 7x + 7.8882y.\]
For the remainder
$\displaystyle r(x,y)=p(x,y)-\sum_{j=1}^3f_j(x,y) \, q_j(x,y) = (0.88972 + 5.5576i) y^2$
we need just one additional node $q_4(x,y)=y$ with the coefficient
$f_4(x,y)=(0.88972 + 5.5576i)y$.
The determinantal representation with $4\times 4$ matrices is $p(x,y)=\det(A+xB +y C)$, where
\begin{align*}
A & = {\footnotesize \left[\begin{matrix} 1 & 0 & 0 & 0\cr
0 & 1 & 0 & 0 \cr
0 & 0 &  1 & 0\cr
0 & 0 & 0 & 1\end{matrix}\right]},\quad
B = {\footnotesize\left[\begin{array}{rrrr} 2 & 4 & 7 & 0\cr
-1 & 0 & 0 & 0 \cr
0 & -1 &  0 & 0\cr
0 & 0 & 0 & 0\end{array}\right]}, \quad \text{and} \\[1mm]
C & = {\footnotesize
\left[\begin{matrix} 3 & 4.9681+ 4.5036i & 7.8882 & 0.88972 + 5.5576i\cr
-0.0079857 + 1.1259i & 0 & 0 & 0 \cr
0 & -0.0079857 - 1.1259i &  0 & 0\cr
-1 & 0 & 0 & 0\end{matrix}\right]}.
\end{align*}
}
\end{example}
\bigskip

\tikzstyle{tocka} = [circle, draw, fill=blue!40, inner sep=0pt, minimum size=5pt, node distance=0.35cm]

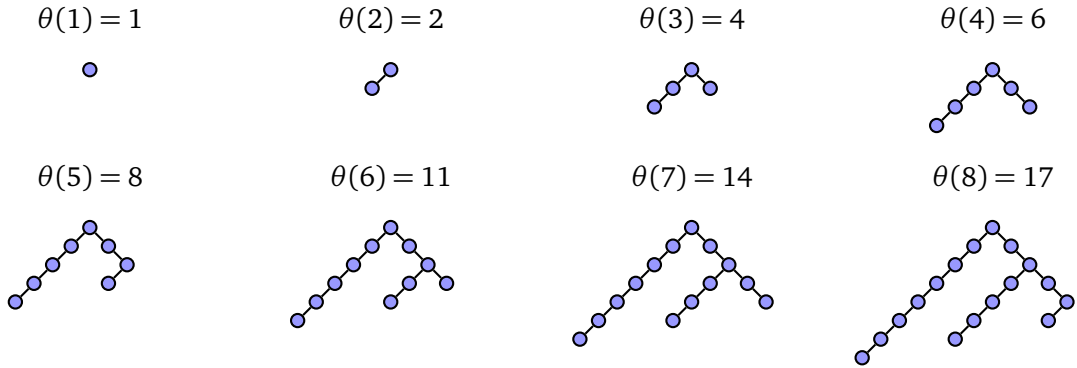
\begin{figure}[!htbp]
\begin{center}
\begin{tikzpicture}[auto,node distance=1cm,thick,main node/.style={tocka}]
  %% subgraph (n=1)
  \begin{scope}[]
  % graph for n=2
  \node[tocka] (00) {};
  \node [above=00,yshift=10pt] {$\theta(1)=1$};
  % draw lines
  \end{scope}

%% subgraph (n=2)
  \begin{scope}[xshift=4cm]
  % graph for n=2
  \node[tocka] (00) {};
  \node[tocka] (10) [below left of=00] {};
  \node [above=00,yshift=10pt] {$\theta(2)=2$};
  % draw lines
  \draw[-] (00) to (10);
  \end{scope}

%% subgraph (n=3)
  \begin{scope}[xshift=8cm]
  % graph for n=4
  \node[tocka] (00) {};
  \node[tocka] (10) [below left of=00] {};
  \node[tocka] (20) [below left of=10] {};
  \node[tocka] (01) [below right of=00] {};
  \node [above=00,yshift=10pt] {$\theta(3)=4$};
  % draw lines
  \draw[-] (00) to (01);
  \draw[-] (00) to (10) to (20);
  \end{scope}

%% subgraph (n=4)
  \begin{scope}[xshift=12cm]
  % graph for n=4
  \node[tocka] (00) {};
  \node[tocka] (10) [below left of=00] {};
  \node[tocka] (20) [below left of=10] {};
  \node[tocka] (30) [below left of=20] {};
  \node[tocka] (01) [below right of=00] {};
  \node[tocka] (02) [below right of=01] {};
  \node [above=00,yshift=10pt] {$\theta(4)=6$};
  % draw lines
  \draw[-] (00) to (01) to (02);
  \draw[-] (00) to (10) to (20) to (30);
  \end{scope}

  \begin{scope}[yshift=-2.1cm, xshift=12cm]
% graph for n=8
  \node[tocka] (00) {};
  \node[tocka] (10) [below left of=00] {};
  \node[tocka] (20) [below left of=10] {};
  \node[tocka] (30) [below left of=20] {};
  \node[tocka] (40) [below left of=30] {};
  \node[tocka] (50) [below left of=40] {};
  \node[tocka] (60) [below left of=50] {};
  \node[tocka] (70) [below left of=60] {};
  \node[tocka] (01) [below right of=00] {};
  \node[tocka] (02) [below right of=01] {};
  \node[tocka] (12) [below left of=02] {};
  \node[tocka] (22) [below left of=12] {};
  \node[tocka] (32) [below left of=22] {};
  \node[tocka] (42) [below left of=32] {};
  \node[tocka] (03) [below right of=02] {};
  \node[tocka] (04) [below right of=03] {};
  \node[tocka] (15) [below left of=04] {};
  \node [above=00,yshift=10pt] {$\theta(8)=17$};
  % draw lines
  \draw[-] (00) to (01) to (02) to (03) to (04);
  \draw[-] (00) to (10) to (20) to (30) to (40) to (50) to (60) to (70);
  \draw[-] (02) to (12) to (22) to (32) to (42);
  \draw[-] (04) to (15);
  \end{scope}

  %% subgraph (n=7)
  \begin{scope}[yshift=-2.1cm, xshift=8cm]
  % graph for n=7
  \node[tocka] (00) {};
  \node[tocka] (10) [below left of=00] {};
  \node[tocka] (20) [below left of=10] {};
  \node[tocka] (30) [below left of=20] {};
  \node[tocka] (40) [below left of=30] {};
  \node[tocka] (50) [below left of=40] {};
  \node[tocka] (60) [below left of=50] {};
  \node[tocka] (01) [below right of=00] {};
  \node[tocka] (02) [below right of=01] {};
  \node[tocka] (12) [below left of=02] {};
  \node[tocka] (22) [below left of=12] {};
  \node[tocka] (32) [below left of=22] {};
  \node[tocka] (03) [below right of=02] {};
  \node[tocka] (04) [below right of=03] {};
  \node [above=00,yshift=10pt] {$\theta(7)=14$};
  % draw lines
  \draw[-] (00) to (01) to (02) to (03) to (04);
  \draw[-] (00) to (10) to (20) to (30) to (40) to (50) to (60);
  \draw[-] (02) to (12) to (22) to (32);
  \end{scope}

  %% subgraph (n=6)
  \begin{scope}[yshift=-2.1cm, xshift=4cm]
  % graph for n=6
  \node[tocka] (00) {};
  \node[tocka] (10) [below left of=00] {};
  \node[tocka] (20) [below left of=10] {};
  \node[tocka] (30) [below left of=20] {};
  \node[tocka] (40) [below left of=30] {};
  \node[tocka] (50) [below left of=40] {};
  \node[tocka] (01) [below right of=00] {};
  \node[tocka] (02) [below right of=01] {};
  \node[tocka] (12) [below left of=02] {};
  \node[tocka] (22) [below left of=12] {};
  \node[tocka] (03) [below right of=02] {};
  \node [above=00,yshift=10pt] {$\theta(6)=11$};
  % draw lines
  \draw[-] (00) to (01) to (02) to (03);
  \draw[-] (00) to (10) to (20) to (30) to (40) to (50);
  \draw[-] (02) to (12) to (22);
  \end{scope}

  %% subgraph (n=5)
  \begin{scope}[yshift=-2.1cm]
  % graph for n=4
  \node[tocka] (00) {};
  \node[tocka] (10) [below left of=00] {};
  \node[tocka] (20) [below left of=10] {};
  \node[tocka] (30) [below left of=20] {};
  \node[tocka] (40) [below left of=30] {};
  \node[tocka] (01) [below right of=00] {};
  \node[tocka] (02) [below right of=01] {};
  \node[tocka] (12) [below left of=02] {};
  \node [above=00,yshift=10pt] {$\theta(5)=8$};
  % draw lines
  \draw[-] (00) to (01) to (02);
  \draw[-] (00) to (10) to (20) to (30) to (40);
  \draw[-] (02) to (12);
  \end{scope}
\end{tikzpicture}
\vspace{-0.5em}
\end{center}
\caption{Representation trees for polynomials of degrees 1 to 8.\label{fig:small}}
\end{figure}

Representation trees for polynomials of degree from 1 to 8 are presented in Figure~\ref{fig:small}.
If we compare them to the determinantal representations from Section~\ref{sec:lin1} in Figure~\ref{fig:lin1first8},
then we see that representations obtained by Algorithm~\ref{alg:lin2} are much smaller.
The following lemma shows that asymptotically we use $\frac{1}{3}$ fewer nodes than in Section~\ref{sec:lin1}.

\bigskip\noindent
\begin{lemma}
Algorithm~\ref{alg:lin2} returns representation tree $G$ for the linearization of a polynomial
$p(x,y)$ of degree $n$ of size
\begin{equation}\label{eq:sizeg2}
\theta(n)=|G|=\left\{\begin{matrix}\frac{1}{6} n(n+5), & n=3k\ \ {\rm or}\ \ n=3k+1,\cr
 \frac{1}{6} n(n+5)-\frac{1}{3}, & n=3k+2.\end{matrix}\right.
\end{equation}
\end{lemma}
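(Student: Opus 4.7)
My plan is to prove the formula by induction on $n$, using a recurrence that mirrors the recursive structure of Algorithm~\ref{alg:lin2}. First I would extract from the algorithm the recurrence
\[
\theta(n) = n + 1 + \theta(n-3), \qquad n \geq 4,
\]
which accounts for the three disjoint parts of the output tree $G$ in the generic case: the initial branch $q_1, q_2, \ldots, q_n$ of $n$ nodes built in Step 2; the single extra node $q_{n+1}$ introduced in Step 6; and the subtree returned by the recursive call of Step 8 on the polynomial $s(x,y)$ of degree $n-3$, which contributes $\theta(n-3)$ nodes.

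I would then verify the base cases by direct inspection. For $n = 1$ and $n = 2$ the branch constructed in Step 2 is already the whole tree, since a remainder of the form $y^2 s(x,y)$ would have $\deg s < 0$ and so must vanish; hence $\theta(1) = 1$ and $\theta(2) = 2$. For $n = 3$ the polynomial $s(x,y)$ has degree $0$ and is generically a nonzero constant, so Step 7 appends exactly one additional node $q_4$, giving $\theta(3) = 4$.

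With the recurrence and base cases in hand, the proof reduces to a case distinction on the residue of $n$ modulo $3$. For $n = 3k$ one substitutes the inductive hypothesis to obtain $\theta(3k) = 3k + 1 + \tfrac{1}{6}(3k-3)(3k+2) = \tfrac{1}{6}n(n+5)$, and the analogous manipulation works for $n = 3k+1$. For $n = 3k+2$ the $-\tfrac{1}{3}$ correction in the inductive hypothesis propagates through the recurrence unchanged, yielding $\tfrac{1}{6}n(n+5) - \tfrac{1}{3}$; the three base cases $\theta(1)$, $\theta(2)$, $\theta(3)$ precisely seed these three congruence classes.

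The main (mild) obstacle is justifying the recurrence itself, that is, ensuring that the recursive call in Step 8 really is invoked on a polynomial of exact degree $n-3$ so the recursion has the claimed depth. This was already addressed in the discussion preceding the lemma: the algorithm is reduced via a linear change of variables to the case $\alpha_{n0} \neq 0$, the coefficients $f_k$ in Step 3 are engineered so that the remainder has the form $y^2 s(x,y)$, and in the generic situation the degree of $s$ is exactly $n-3$. Once these points are granted, the lemma is just the bookkeeping above.
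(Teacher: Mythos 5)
Your proposal is correct and follows essentially the same route as the paper: both derive the recurrence $\theta(n)=n+1+\theta(n-3)$ from the recursive structure of Algorithm~\ref{alg:lin2}, seed it with the initial values $\theta(1)=1$, $\theta(2)=2$, $\theta(3)=4$, and observe that \eqref{eq:sizeg2} is the solution. You merely spell out the verification by residue class modulo $3$, which the paper leaves implicit.
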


\begin{proof}
It follows from the recursion in the algorithm (see Figure~\ref{fig:lin23})
that the number of nodes satisfies the recurrence equation
\[\theta(n) = n + 1 + \theta(n-3).\]
The solution of this equation with the initial values $\theta(1)=1$, $\theta(2)=2$, and $\theta(3)=4$
is (\ref{eq:sizeg2}).
\end{proof}

\bigskip\noindent
For generic polynomials of degrees $n=3$ and $n=4$ it turns out to be possible
to modify the construction and save one node in the representation tree.
The main idea is to apply a linear substitution of variables $x$ and $y$ in
the preliminary phase to
the polynomial $p(x,y)$ to eliminate some of the terms.
This implies that the resulting matrices are of order $3$ ($n=3$) and $5$ ($n=4$),
instead of order $4$ and $6$ as seen before
and, it also reduces the size of the matrices for $n = 3k$ and $n = 3k+1$ by 1.
We give details in the following two subsections.

%%%%%%%%%%%%%%%%%%%%%%%%%%%%%%%%%%%%%%%%%%%%%%%%%%%%%%%%%%%%%%%%%%%%%
\subsection{The special case $n=3$}\label{ss:case3}
Let us consider a cubic bivariate polynomial
$p(x,y)=\alpha_{00}+\alpha_{10}x+\alpha_{01}y+\cdots+\alpha_{30}x^3+\cdots+\alpha_{03}y^3$,
where we can assume that $\alpha_{30}\ne 0$.
We introduce a linear substitution
of the form $x=\widetilde x + s\widetilde y + t$ and $y=\widetilde y$, where $s$ is
such that
\begin{equation}\label{eq:sc31}
h(s):=\alpha_{30}s^3+\alpha_{21}s^2+\alpha_{12}s + \alpha_{03}=0
\end{equation}
and $\displaystyle t={p_{20}s^2+p_{11}s+p_{02}\over h'(s)}$.

The substitution is well defined if $s$ is a single root of (\ref{eq:sc31}) and the only
situation, where this is not possible, is when $h$ has a triple root.

The above substitution transforms $p(x,y)$ into a polynomial $\widetilde p(\widetilde x,\widetilde y)$
such that its coefficients $\widetilde p_{03}$ and $\widetilde p_{02}$
are both zero. If we apply Algorithm~\ref{alg:lin2} to
$\widetilde p(\widetilde x,\widetilde y)$ and choose $\zeta_1=0$ for the first zero, then
the remainder in Step 4 is zero and we get
$3\times 3$ matrices
$\widetilde A$, $\widetilde B$, and $\widetilde C$ such that
$\det(\widetilde A+\widetilde x\widetilde B+\widetilde y \widetilde C)=\widetilde p(\widetilde x,\widetilde y)$.
Now, it is easy to see that for
$A=\widetilde A-t \widetilde B$, $B=\widetilde B$, and $C=\widetilde C-s \widetilde B$,
$\det(A+x B + y C)=p(x,y)$.

\bigskip\noindent
\begin{example}\label{ex::pol3}
{\rm
We take 
the recurrent example
$p(x,y)=1+2 x+3 y+4 x^2+5xy+6y^2+7x^3+8x^2y+9xy^2+10y^3$
(see Examples \ref{ex::cubic1} and \ref{ex::pol2}). If we take
$s=1.1269$ (see (\ref{eq:sc31})) and
$t=-0.30873$, then substitution
$x=\widetilde x + s\widetilde y + t$ and $y=\widetilde y$ changes
$p(x,y)$ into a polynomial
\[p(\widetilde x,\widetilde y)=0.55782+1.5317 \widetilde x +0.49276 \widetilde y -2.4833 \widetilde x^2 + 5.6571 \widetilde x \widetilde y
+ 7 \widetilde x^3 -15.665 \widetilde x^2\widetilde y + 17.637\widetilde x\widetilde y^2.\]
Algorithm~\ref{alg:lin2} gives $3\times 3$ matrices
$\widetilde A$, $\widetilde B$, and $\widetilde C$ such that
$\det(\widetilde A+\widetilde x\widetilde B+\widetilde y \widetilde C)=\widetilde p(\widetilde x,\widetilde y)$,
from which matrices
\begin{align*}
A & = {\footnotesize \left[\begin{matrix} 1.0307 &  -0.76665 & 2.1611\cr
  -0.30873 &    1 &            0\cr
            0  & -0.30873 &   1\end{matrix}\right],\quad
B = {\footnotesize \left[\begin{matrix} 1.5317 &  -2.4833 & 7\cr
  -1&    0 &            0\cr
            0  & -1 &   0\end{matrix}\right]},} \quad \text{and} \\[1mm]
C & = {\footnotesize \left[\begin{matrix} 2.2189 &  2.8587 & 0.00559 + 7.8813i\cr
  -1.1269 &    0 &            0\cr
            0  & -0.0079857 + 1.1259i &   0\end{matrix}\right]}
\end{align*}
such that $\det(A+x B+y C)=p(x,y)$, are obtained and we have a $3\times 3$ linearization.
}
\end{example}

%%%%%%%%%%%%%%%%%%%%%%%%%%%%%%%%%%%%%%%%%%%%%%%%%%%%%%%%%%%%%%%%%%%%%
\subsection{The special case $n=4$}\label{ss:case4}
 Before we give a construction for a generic quartic
bivariate polynomial, let us consider a particular case, when a
polynomial $p(x,y)=\sum_{j=0}^4\sum_{k=0}^{4-j}\alpha_{jk}x^{j}y^k$ of degree 4
is such that $\alpha_{30}=\alpha_{40}=\alpha_{03}=\alpha_{04}=0$. In this case
$5$ nodes are enough to represent the polynomial $p(x,y)$. The representation tree
for the polynomial $p(x,y)$ is presented in Figure~\ref{fig:ex4new}, where
$\zeta_1$ and $\zeta_2$ are the zeros of $\alpha_{31}\zeta^2+\alpha_{22}\zeta + \alpha_{03}$.

\begin{figure}[!htbp]
\begin{center}
\includegraphics{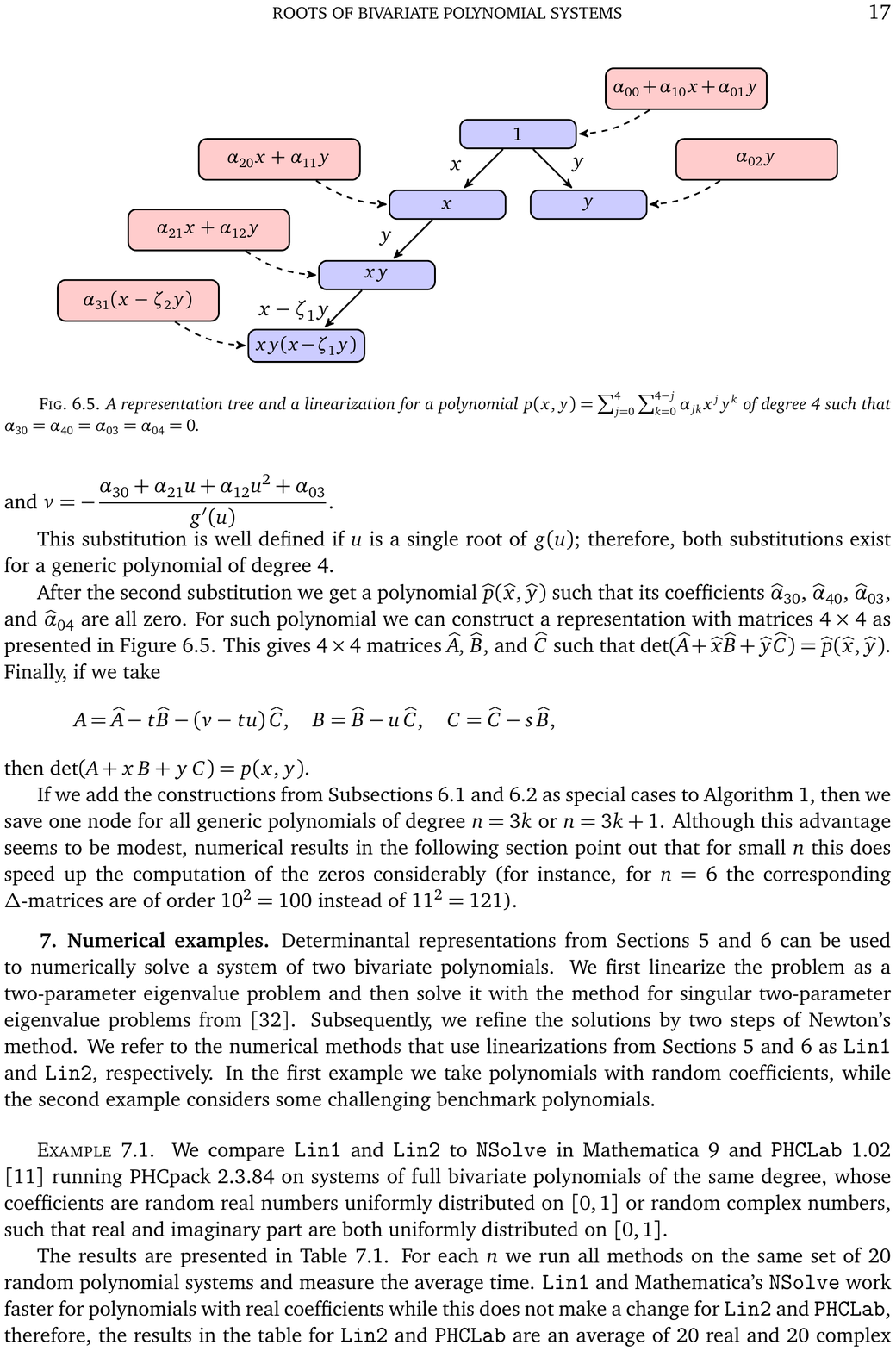}
\end{center}
\caption{A representation tree and a linearization for a polynomial
$p(x,y)=\sum_{j=0}^4\sum_{k=0}^{4-j}\alpha_{jk}x^{j}y^k$ of degree 4
such that $\alpha_{30}=\alpha_{40}=\alpha_{03}=\alpha_{04}=0$.\label{fig:ex4new}}
\end{figure}

For a generic quartic polynomial we first transform it
into one with zero coefficients at $x^3$, $x^4$, $y^3$, and $y^4$.
Except for very special polynomials, we can do this
 with a combination of two linear substitutions. Similar as in case $n=3$,
we first introduce a linear substitution
of the form $x=\widetilde x + s\widetilde y + t$ and $y=\widetilde y$, where $s$ is
such that
\[
h(s):=\alpha_{40}s^4+\alpha_{31}s^3+\alpha_{22}s^2 +\alpha_{13} s + \alpha_{04}=0
\]
and $\displaystyle t=-{\alpha_{30}s^3+\alpha_{21}s^2+\alpha_{12}s+\alpha_{03}\over h'(s)}$.

The substitution is well defined if $s$ is a single root of $h(s)$. After the
substitution we have a polynomial
$\widetilde p(\widetilde x,\widetilde y)$ such that its
coefficients $\widetilde p_{04}$ and $\widetilde p_{03}$ are both zero.
On this polynomial we apply a new substitution
$\widetilde x=\widehat x$ and $\widetilde y= u \widehat x + \widehat y + v$,
where
\[
g(u):=\widetilde \alpha_{40}+\widetilde \alpha_{31} u+\widetilde \alpha_{22} u^2 +\widetilde \alpha_{13} u^3 = 0
\]
and
$\displaystyle v=-{\alpha_{30}+\alpha_{21}u+\alpha_{12}u^2+\alpha_{03}\over g'(u)}$.

This substitution is well defined if $u$ is a single root of $g(u)$;
therefore, both substitutions exist for a generic polynomial of degree $4$.

After the second substitution we get a polynomial
$\widehat p(\widehat x,\widehat y)$ such that its
coefficients $\widehat \alpha_{30}$, $\widehat \alpha_{40}$, $\widehat \alpha_{03}$,
and $\widehat \alpha_{04}$ are all zero.
For such polynomial we can construct a representation with matrices $4\times 4$ as presented
in Figure \ref{fig:ex4new}. This gives
$4\times 4$ matrices
$\widehat A$, $\widehat B$, and $\widehat C$ such that
$\det(\widehat A+\widehat x\widehat B+\widehat y \widehat C)=\widehat p(\widehat x,\widehat y)$.
Finally, if we take
\[
A=\widehat A-t \widehat B -(v-tu) \, \widehat C,\quad
B=\widehat B-u \, \widehat C,\quad
C=\widehat C-s \, \widehat B,
\]
then $\det(A + x \, B + y \, C)=p(x,y)$.

If we add the constructions from Subsections \ref{ss:case3} and \ref{ss:case4}
as special cases to Algorithm~\ref{alg:lin2},
then we save one node for all generic
polynomials of degree $n=3k$ or $n=3k+1$. Although this advantage seems to be modest,
numerical results in the following section point out that for small $n$
this does speed up the computation of the zeros considerably
(for instance, for $n=6$ the corresponding $\Delta$-matrices are of order
$10^2 = 100$ instead of $11^2 = 121$).

%%%%%%%%%%%%%%%%%%%%%%%%%%%%%%%%%%%%%%%%%%%%%%%%%%%%%%%%%%%%%%%%%%%%%
\section{Numerical examples}\label{sec:numex}

Determinantal representations from Sections \ref{sec:lin1} and \ref{sec:lin2} can be used
to numerically solve a system of two bivariate polynomials. We
first linearize the problem as a two-parameter
eigenvalue problem and then solve it with the method for
singular two-parameter eigenvalue problems from \cite{MuhicPlestenjakLAA}.
Subsequently, we refine the solutions by two steps of Newton's method. We
refer to the numerical methods that use linearizations
from Sections \ref{sec:lin1} and \ref{sec:lin2} as
{\tt Lin1} and {\tt Lin2}, respectively.
In the first example we take polynomials with random coefficients,
while the second example considers some challenging benchmark polynomials.

\bigskip\noindent
\begin{example}\label{ex:randtest}\rm
We compare {\tt Lin1} and {\tt Lin2} to
{\tt NSolve} in Mathematica 9 and {\tt PHCLab} 1.02 \cite{PHClab} running
PHCpack 2.3.84 on systems of full bivariate polynomials of the same degree,
whose coefficients are random real numbers
uniformly distributed on $[0,1]$ or random complex numbers,
such that real and imaginary part are both uniformly distributed on $[0,1]$.

The results are presented in Table \ref{tbl:one}. For each $n$ we run all methods on the same set of
20 random polynomial systems and measure the average time.
{\tt Lin1}
and Mathematica's {\tt NSolve} work faster for polynomials with real coefficients
while this does not make a change for {\tt Lin2} and {\tt PHCLab}, therefore, the results in the table for {\tt Lin2} and
{\tt PHCLab} are an average of 20 real and 20 complex examples. Clearly,
if {\tt Lin1} is applied to a polynomial with real coefficients,
then matrices $\Delta_0$, $\Delta_1$, and $\Delta_2$
are real. If we apply {\tt Lin2} then the matrices are complex in general
as roots of univariate polynomials are used in the construction. Although the
complex arithmetic
is more expensive than the real one, complex eigenproblems from {\tt Lin2} are so
small that they are solved faster than the larger real problems from {\tt Lin1}.

\begin{table}[!htbp]
\begin{footnotesize}
\begin{center}
\caption{Average computational times for {\tt Lin1}, {\tt Lin2}, {\tt NSolve}, and {\tt PHCLab} for
random full bivariate polynomial systems of degree $3$ to $10$. For {\tt Lin1} and {\tt NSolve}
results are separated for real $(\RR)$ and complex polynomials $(\CC)$.
Notice that these are the running times; the accuracy of the methods
varies, as we discuss in the text.}\label{tbl:one}
\begin{tabular}{r|cccccc|rr}
\hline & \multicolumn{6}{|c|}{Time (sec)} & \multicolumn{2}{|c}{$\Delta$-matrix size}\\
$n$ & {\tt Lin1} ($\RR$) & {\tt Lin1} ($\CC$) & {\tt Lin2} & {\tt PHCLab} & {\tt NSolve} ($\CC$) & {\tt NSolve} ($\RR$) & {\tt Lin1} & {\tt Lin2}\\
\hline \rule{0pt}{2.3ex}%
3 & 0.01 & \ph20.01 & $<0.01$ & 0.18 & 0.25 & 0.04 & 25  &  9  \\
4 & 0.02 & \ph20.02 & 0.01 & 0.21 & 0.42 & 0.07 & 64  & 25  \\
5 & 0.03 & \ph20.05 & 0.02 & 0.26 & 0.67 & 0.17 & 121 & 64  \\
6 & 0.08 & \ph20.18 & 0.05 & 0.34 & 1.04 & 0.22 & 225 & 100 \\
7 & 0.23 & \ph20.57 & 0.16 & 0.44 & 2.75 & 0.61 & 361 & 169 \\
8 & 0.67 & \ph22.04 & 0.54 & 0.59 & 2.17 & 0.88 & 576 & 289 \\
9 & 2.25 & \ph26.21 & 1.33 & 0.80 & 5.53 & 1.48 & 841 & 400 \\
10 & 6.24 & 16.6\ph2 & 3.38 & 1.05 & 8.12 & 3.85 & 1225 & 576 \\
\hline
\end{tabular}
\end{center}
\end{footnotesize}
\end{table}

Computational times for {\tt Lin1}, {\tt Lin2}, and PHCLab are very similar for each of
the 20 test problems of the same degree. On the other hand, {\tt NSolve} needs
substantially more time for certain problems. For example, for complex polynomials
of degree $n=7$, {\tt NSolve} needed approximately $1.5s$ for 16 of the 20 examples,
and $7.5s$ for the additional 4 examples. That explains why the average
time for {\tt NSolve} ($\CC$) is larger in case $n=7$ than in case $n=8$.

Beside the computational time, accuracy and reliability are another important factors.
{\tt NSolve} is the only method that finds all solutions in all examples, but on
the other hand, the results are on average less accurate than with other methods.
As a measure of accuracy we use the maximum value of
\begin{equation}
\max(|p_1(x_0,y_0)|,|p_2(x_0,y_0)|) \ \|J^{-1}(x_0,y_0)\|,\label{eq:condn}
\end{equation}
where $J(x_0,y_0)$ is a Jacobian matrix of $p_1$ and $p_2$ at $(x_0,y_0)$,
over all computed zeros $(x_0,y_0)$. $\|J^{-1}(x_0,y_0)\|$ is an absolute
condition number of a zero $(x_0,y_0)$ and we assume that in random
examples all zeros are simple. For a good method (\ref{eq:condn}) should
be as small as possible.

For real or complex systems of degree $n\le 7$, {\tt Lin2} is the fastest
method and usually  the most accurate one. It is never
significantly less accurate than the others, so it clearly wins
in this case.
For real polynomials of degree $n=8$ computational times of all methods are very close. {\tt NSolve} is
the fastest method with a tight margin in 17 out of 20
cases, but is also several orders of magnitude less accurate. {\tt PHCLab} is faster
and slightly less accurate than {\tt Lin2} in 5 out of 40 cases,
but, in one of them it fails to compute all the solutions.

For $n=9$ and $n=10$ {\tt PHCLab} becomes the fastest method, but is less reliable.
In many cases it does not compute all the solutions. For $n=9$ this
happens in 14 out of 40 times and for $n=10$ in $17$ out of $40$ cases. As {\tt PHCLab}
is using random initial systems, a possible remedy is to run {\tt PHCLab} several times.
Also {\tt Lin2} fails to compute solutions for $2$ real examples for $n=10$.
A remedy for {\tt Lin2} in these cases is to interchange variables $x$ and $y$.

{\tt Lin1} is competitive in particular for real systems of degree $n\le 7$.
For $n=8$ it misses one solution in one example and in two examples for
$n=10$ we have to adapt the criteria for detecting a numerical rank
in the staircase algorithm to get the
correct number of solutions.

Let us remark that each node less in the representation tree really does make a difference.
For instance, if we do not apply the special case for $n=4$ in Subsection \ref{ss:case4}, then
the $\Delta$ matrices for {\tt Lin2} for polynomial systems of degree $n=10$ are of size $625\times 625$ instead
of $576\times 576$ and the average computational time rises from
$3.38s$ to $3.95s$.
\end{example}

\bigskip\noindent
\begin{example}\label{ex:25}\rm
We test {\tt Lin1} and {\tt Lin2} on 25 examples {\tt ex001} to {\tt ex025} from \cite{Buse}. This set
contains challenging benchmark problems with polynomials of small degree from $(3,2)$ to
$(11,10)$ that have many multiple zeros and usually have less solutions than a generic pair of the
same degrees. {\tt Lin1} and {\tt Lin2} performed satisfactorily on most examples, but, they also failed on some.
Instead of giving the details for all 25 examples, we give the key observations.
\begin{itemize}
\item Multiple zeros can present a problem for the algorithm from \cite{HKP}
that is used to solve the  projected regular problem
$\widetilde\Delta_1 \, w=x \, \widetilde \Delta_0 \, w$,
$\widetilde\Delta_2 \, w=y \, \widetilde \Delta_0 \, w$
that is obtained from (\ref{drugi}) by the modified staircase algorithm from
\cite{MuhicPlestenjakLAA}. The QZ algorithm is first applied
to 
$\widetilde\Delta_1 \, w=x \, \widetilde \Delta_0 \, w$
and then 
$\widetilde\Delta_2 \, w=y \, \widetilde \Delta_0 \, w$ is multiplied
by $Q$ and $Z$. The eigenvalues are clustered along the diagonal so that
multiple eigenvalues $x$ should be in the same block. For several of the 25 examples
with eigenvalues of high multiplicity the
clustering criteria have to be adapted otherwise the results are not so accurate.
\item {\tt Lin2} is faster, but the
accuracy can be lost if the polynomial
in Step 1 of Algorithm~\ref{alg:lin2} has multiple zeros,
an example is $p_2$ from  {\tt ex008}. The method
fails for {\tt ex014}, {\tt ex018}, and {\tt ex020}.

\item We get very good results in example {\tt ex005} with the system
$x^9+y^9-1=0$ and $x^{10}+y^{10}-1=0$ using {\tt Lin2}. In this case {\tt Lin2} returns
optimal determinantal representations with matrices of size $9\times 9$ and
$10\times 10$, respectively. The obtained two-parameter eigenvalue problem is
not singular and we get the solutions in $0.08s$, while {\tt PHCLab}
and {\tt NSolve} need $0.6s$. For comparison, {\tt Lin1}, applied to the same problem,
returns $\Delta$ matrices of size $1015\times 1015$, while
{\tt Lin2} gives $\Delta$ matrices of size $90\times 90$.
\item {\tt Lin1} is slower but can be more accurate. Because
there is no computation in the construction, no errors are introduced
in the construction of the linearization. {\tt Lin1} manages to solve 22 out of
25 examples (in some examples the parameters have to be adapted to make it work),
but fails for {\tt ex007}, {\tt ex016}, and {\tt ex018}.
\item {\tt NSolve} always finds all solutions but is slower than {\tt Lin1} and {\tt Lin2} except
for {\tt ex014} where polynomials are of degrees $11$ and $10$. {\tt PHCLab} usually finds just one instance of a multiple
eigenvalue and thus returns much less zeros.
\end{itemize}
\end{example}

\begin{example}\rm\label{ex:tri} Encouraged by the good results for
{\tt ex005} in Example \ref{ex:25}, we carry out some experiments with polynomials
of form $p(x,y)=\alpha_{n0}x^n+\cdots+\alpha_{0n}y^n+h(x,y)$, where
$h(x,y)$ is a polynomial of small degree $m\ll n$. For such polynomials Algorithm~\ref{alg:lin2}
returns matrices of size $n+1+\theta(m)$ or even smaller.
For example, it is easy to see that for $m=1$
we get linearization of the smallest possible size $n\times n$. We compared {\tt Lin2}, {\tt PHCLab}, and
{\tt NSolve}. Computational times for random polynomials with complex coefficients of the above form
are presented in Table \ref{tbl:two}. As $n$ increases, {\tt PHCLab} becomes faster then {\tt Lin2},
but in most cases it does not compute all solutions. For instance, for $n=30$ it
misses 16 and 20 zeros for $m=1$ and $m=3$, respectively. Therefore, {\tt Lin2}
might be the preferred method for such polynomial systems.

\begin{table}[!htbp]
\begin{footnotesize}
\begin{center}
\caption{Computational times for {\tt Lin2}, {\tt PHCLab}, and {\tt NSolve} for
systems of two polynomials of the form
$p(x,y)=\alpha_{n0}x^n+\cdots+\alpha_{0n}y^n+h(x,y)$,
where degree of $h(x,y)$ is $m\ll n$.}\label{tbl:two}
\vspace{3mm}
\begin{tabular}{cc|lcc}
\hline \multicolumn{2}{c}{}& \multicolumn{3}{|c}{Time (sec)}\\
$n$ & $m$ & {\tt Lin2} & {\tt PHCLab} & {\tt NSolve} \\
\hline \rule{0pt}{2.3ex}%
15 & 1 & \ph10.48 & \ph11.9 & \ph13.7 \\
15 & 3 & \ph10.87 & \ph11.8 & \ph14.1 \\[0.5mm]
%\hline
20 & 1 & \ph12.1 & \ph14.5 & 10.8 \\
20 & 3 & \ph13.8 & \ph15.0 & 11.3 \\[0.5mm]
%\hline
25 & 1 & \ph19.6 & 10.6 & 25.0 \\
25 & 3 & 13.1  & 12.7 & 26.3 \\[0.5mm]
%\hline
30 & 1 & 23.2 &  17.4 & 52.8 \\
30 & 3 & 37.2  & 20.1 & 55.0 \\
\hline
\end{tabular}
\end{center}
\end{footnotesize}
\end{table}

\end{example}

\section{Conclusions}\label{sec:conc}

We have proposed two linearizations for bivariate polynomials. The first linearization does not involve any
computation as the coefficients of the polynomials appear as (block) coefficients
of the matrices $A$, $B$, and $C$. This linearization is suitable for both scalar and matrix bivariate polynomials.
The second linearization, useful for scalar polynomials, involves little computation and returns much
smaller matrices. They are
still larger than the theoretically smallest possible size $n\times n$, but their construction
is very simple and fast. Moreover, while the asymptotic order is $\frac{1}{6}n^2$,
the order for small $n$ is about $2n$; for polynomials of degree 3 and 4
we have presented determinantal representations of order 3 and 5, respectively.

As an application we have presented a method for finding roots of two bivariate polynomials.
We show that an approach, where the polynomial system is first linearized into a two-parameter
eigenvalue problem, which is later solved by a modified staircase method, is
numerically feasible and gives good results for polynomials of degree $n \lsim 10$,
as well as for polynomials of higher degree but with few terms.
Any further results on even smaller determinantal representations that can be
efficiently constructed numerically, could enlarge the above degree.

\bigskip\noindent{\bf Acknowledgment } The research was performed while
the first author was visiting the CASA group at TU Eindhoven.
The author wishes to thank the NWO for the visitor's grant and the CASA group for the
hospitality.

\end{document}